\newtheorem*{theorem*}{Theorem}
\newtheorem{theorem}{Theorem}
\newtheorem{definition}{Definition}
\newtheorem{proposition}{Proposition}
\newtheorem{example}{Example}
\newtheorem{remark}{Remark}
\newcommand{\one}{{\bf 1}}
\newcommand{\dist}{{\rm dist}}
\newcommand{\D}{{\cal D}}
\title{Extreme values of the stationary distribution of random walks on directed graphs}
\author{ Sinan Aksoy \thanks{Department of Mathematics, University of California, San Diego,  La Jolla, CA 92093,
({\tt saksoy@ucsd.edu}).}
\and Fan Chung \thanks{Department of Mathematics, University of California, San Diego,  La Jolla, CA 92093,
({\tt fan@ucsd.edu}), Research is supported in part by ONR MURI N000140810747, and AFSOR AF/SUB 552082.}
\and Xing Peng  \thanks{Center for Applied Mathematics, Tianjin University,  Tianjin,    300072 China,
({\tt x2peng@tju.edu.cn}).}
}
\date{}
\begin{document}
\maketitle

\begin{abstract}
We examine the stationary distribution of random walks on directed graphs. In particular, we focus on the {\em principal ratio}, which is the ratio of maximum to minimum values of vertices in the stationary distribution. We give an upper bound for this ratio over all strongly connected graphs on $n$ vertices. We characterize all graphs achieving the upper bound and we give explicit constructions for these extremal graphs. Additionally, we show that under certain conditions, the principal ratio is tightly bounded. We also provide counterexamples to show the principal ratio cannot be tightly bounded under weaker conditions.

 \end{abstract}

\section{Introduction}

In the study of random walks on graphs, many problems that are straightforward for undirected graphs are relatively complicated in the directed case. One basic problem concerns determining stationary distributions of  random walks on simple directed graphs. For an undirected graph, the vector $\pi(v)=\frac{d_v}{\sum_v d_v}$, where $d_v$ is the degree of vertex $v$, is the unique stationary distribution if the graph is connected and non-bipartite. Consequently, the principal ratio, which is the ratio of maximum to minimum values of vertices in the stationary distribution, is $\frac{\max_v d_v}{\min_v d_v}$ and thus is at most $n$, the number of vertices.

In contrast, the directed case is far more subtle: not only does no such closed form solution exist for the stationary distribution, but its principal ratio can be exponentially large in $n$. This has immediate implications for the central question of bounding the rate of convergence of a random walk on a directed graph where extreme values of the stationary distribution play an important role in addition to eigenvalues. For example, it can be shown that for a strongly connected directed graph, the order of the rate of convergence is bounded above by $2 \lambda_1^{-1} (-\log (\min_x \pi(x)))$, where $\lambda_1$ is the first nontrivial eigenvalue of the normalized Laplacian of the directed graph, as defined in \cite{chung2}. Namely after at most
$t \geq 2 \lambda_1^{-1} (-\log (\min_x \pi(x))+2c)$ steps, the total variation distance is at most $e^{-c}$.


 Another application of the stationary distribution and its principal ratio is in the algorithmic design and analysis of vertex ranking, for so-called ``{\it PageRank}" algorithms for directed graphs (since many real-world information networks are indeed directed graphs). PageRank algorithms \cite{acl}  use a variation of random walks with an additional diffusion parameter and therefore it is not surprising that the effectiveness of the algorithm depends on the principal ratio.

In addition to its role in Page Rank algorithmic analysis and bounding the rate of converge in random walks, it has been noted (see \cite{cg}) that the principal ratio can be interpreted as a numerical metric for graph irregularity since it achieves its minimum of $1$ only for regular graphs.


The study of the principal ratio of the stationary distribution has a rich history. We note that the stationary distribution is a special case of the Perron vector, which is the unique positive eigenvector associated with the largest eigenvalue of an irreducible matrix with non-negative entries. There is a large literature examining the Perron vector of the adjacency matrix of undirected graphs, which has been studied by  Cioab\u{a} and Gregory \cite{cg}, Tait and Tobin \cite{taitTobin}, Papendieck and Recht \cite{pr}, Zhao and Hong \cite{zh}, and Zhang \cite{zhang}. In this paper, we focus on principal ratio of the stationary distribution of random walk on a strongly connected directed graph with $n$ vertices.

For directed graphs, some relevant prior results are from matrix analysis. Latham \cite{la}, Minc \cite{minc}, and Ostrowski \cite{os1} studied the Perron vector of a (not necessarily symmetric) matrix with positive entries, which can be used to study matrices associated with complete, weighted directed graphs. However, for our case a relevant prior result comes from Lynn and Timlake, who gave bounds of the principal ratio for {\em primitive} matrices with non-negative entries (see Corollary 2.1.1 in \cite{LT}).  As we will soon further explain, since ergodic random walks on directed graphs have primitive transition probability matrices, their result applies naturally in our setting. Letting $\gamma(D)$ denote the principal ratio of a directed graph $D$, their result yields the bound
\[
\gamma(D)\leq (1+o(1))(n-1)^{n-1},
\]
where $D$ is a strongly connected, aperiodic directed graph on $n$ vertices.

Chung gave an upper bound (see \cite{chung2})  on the principal ratio of a strongly connected directed graph $D$ that depends on certain graph parameters. Namely,
\[
\gamma(D)\leq k^d,
\]
where $d$ is the diameter of the graph and $k$ is the maximum out-degree. Since $d,k\leq n-1$, this bound also implies absolute upper bound on the principal ratio of $(n-1)^{n-1}$ over all strongly connected directed graphs on $n$ vertices.


In this paper, we provide an exact expression for the maximum of the principal ratio over all strongly connected directed graphs on $n$ vertices. Asymptotically, our bound is
\begin{align*}
\gamma(n)=\max_{D: |V(D)|=n}\gamma(D)= \left(\frac{2}{3} + o(1)\right)(n-1)!.
\end{align*}

Furthermore, we show that this bound is achieved by precisely three directed graphs, up to isomorphism.

In addition to an extremal analysis of the principal ratio, we also examine conditions under which the principal ratio can be tightly bounded. Namely, we show that if a directed graph satisfies a degree condition and a discrepancy condition, then its principal ratio can be tightly bounded in the sense that it is ``close" to the minimum possible value of 1. Furthermore, we provide counterexamples that show the principal ratio cannot be tightly bounded if either the discrepancy condition or degree conditions are removed.

\section{Random walks on directed graphs}

Let $D$ be a directed graph with vertex set $V(D)$ and edge set $E(D)$. A directed edge from vertex $u$ to $v$ is denoted by $(u,v)$ or $u \to v$, and we say $v$ is an out-neighbor of $u$ or $u$ is an in-neighbor of $v$. We assume $D$ is simple, meaning $D$ has no loops or multiple edges. For each $u \in V(D)$, the {\it out-neighborhood of $u$}, denoted by $N_D^{+}(u)$, is the vertex set $\{v: (u,v) \in E(D)\}$ and the {\it out-degree of $u$}, denoted by $d_D^+(u)$, is $|N_D^{+}(u)|$.  Similarly, the in-neighborhood and in-degree of $u$ are denoted by $N_D^{-} (u)$ and $d_D^-(u)$ respectively. We will omit the subscript $D$ whenever $D$ is clear from context. A {\it walk} is a sequence of vertices $(v_0,v_1,\dots,v_k)$ where $(v_i,v_{i+1})$ is an edge.

A random walk on a directed graph is defined by a transition probability matrix $P$, where $P(u,v)$ denotes the probability of moving from vertex $u$ to vertex $v$. In this paper, we consider {\em simple} random walks in which moving from a vertex to any of its neighbors is equally likely. Accordingly, the probability transition matrix $P$ is defined by
\[
P(u,v) =
\begin{cases}
\frac{1}{d^+(u)}, & \text{if } (u,v) \ \text{is an edge}, \\
0 & \text{otherwise}.
\end{cases}
\]

While we assume $P$ is of the above form and consider only random walks on directed graphs with unweighted edges, we note that every finite Markov chain can be viewed as a random walk on a weighted directed graph. Namely, if $w_{uv}\geq 0$ denote edge weights, a general probability transition matrix $P$ can be defined as

\[
P(u,v)=\frac{w_{uv}}{\sum_{z}w_{uz}}.
\]

A probability distribution is a function $\pi: V(G) \rightarrow \mathbb{R}^+ \cup \{0\}$ satisfying $\sum_v \pi(v)=1$ and is said to be a {\em stationary distribution} of a random walk if

\[
\pi P=\pi,
\]

\noindent where $\pi$ is viewed as a row vector. It can be easily shown that $\pi(v)=\frac{d_v}{\sum_u d_u}$ is a stationary distribution for a simple random walk on {\em any} undirected graph and is unique if the graph is connected.  For a strongly connected directed graph,    the existence of a stationary distribution is guaranteed by the celebrated Perron-Frobenius Theorem.
Since  $\sum_v P(u,v)=1$ for strongly connected directed graphs,
\[
P\one=\one,
\]

\noindent and thus the all ones vector $\one$ is trivially the right Perron eigenvector associated with eigenvalue 1. By the Perron Frobenius theorem, there exists a left (row) eigenvector $\phi$ with positive entries such that

\[
\phi P =\phi.
\]

We may scale $\phi$ so that $\sum_u \phi(u)=1$, in which case $\phi$ is the (unique) stationary distribution which we refer to as the Perron vector. While there is no closed formula for a stationary distribution of a strongly connected directed graph in general, a closed formula does exist for those in which the in-degree of each vertex is equal to its out-degree.

\begin{example}  Eulerian directed graphs have stationary distribution proportional to their out-degree sequences, $\phi(v)=\frac{d_v^+}{\sum_{u}d_u^+}$. Consequently, the stationary distribution of a directed regular graph with in-degrees and out-degrees all equal is given by the uniform distribution, $\phi=\one/n$.
\end{example}

The {\it principal ratio}  $\gamma(D)$ of a strongly connected, directed graph $D$ is denoted by
\[
\gamma(D)=\frac{\max_u \phi(u)}{\min_u \phi(u)}.
\]

For Eulerian directed graphs, the principal ratio is the ratio of the largest to smallest out-degree. Since regular directed graphs are Eulerian with out-degrees all equal, they achieve the minimum possible principal ratio of 1. Thus the principal ratio can be regarded as one numerical measure of a directed graphs irregularity.

A random walk is ergodic if for any initial distribution $f$, the random walk converges to the unique stationary distribution, i.e.,

\[
\lim_{k \to \infty} fP^k=\phi.
\]

For undirected graphs, the spectral decomposition of $P$ shows a random walk is ergodic if and only if the graph is connected and non-bipartite. However, the directed case requires a more nuanced criterion. For example, while a random walk on an undirected cycle $C_n$ with $n$ odd is ergodic, a random walk on a directed cycle $C_n$ is not. A random walk on a directed graph is ergodic if and only if $D$ is strongly connected and {\em aperiodic}, i.e., the greatest common divisor of the lengths of all its directed cycles is 1.

Directed graphs which are both strongly connected and aperiodic have {\it primitive} transition matrices. That is, for such graphs there exists some integer $k$ such that all entries of $P^k$ are positive.






\section{A sharp upper bound on the principal ratio}

In this paper, we will prove an upper bound on the principal ratio in terms of $n$ that is best possible. For $n\geq 3$, we define a function

\[
\gamma(n)=\max\{\gamma(D): D \text{ is strongly connected  with } n \textrm{ vertices}\}.
\]

\noindent We will show:

\begin{theorem} \label{asymThm}
The maximum of the principal ratio of the stationary distribution over all strongly connected directed graphs on $n$ vertices is asymptotically
\begin{align*}
\gamma(n)&=\left( \frac{2}{3}+o(1) \right)(n-1)!.
\end{align*}
\end{theorem}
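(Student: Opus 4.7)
The plan is to prove matching asymptotic upper and lower bounds on $\gamma(n)$.

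For the \emph{upper bound}, the starting point is that whenever $u \to v$ is an edge, the stationarity identity $\phi(v) = \sum_{w \to v} \phi(w)/d^+(w)$ implies $\phi(u) \leq d^+(u)\,\phi(v)$. Iterating along any directed walk $x_0 \to x_1 \to \cdots \to x_k$ gives $\phi(x_0)/\phi(x_k) \leq \prod_{i=0}^{k-1} d^+(x_i)$. I would take $x_0$ and $x_k$ to be vertices of maximum and minimum $\phi$-value respectively, and choose a shortest path between them. The shortest-path property forbids edges $x_i \to x_j$ with $j > i+1$, so $d^+(x_i) \leq n - k + i$, and the product is bounded by $(n-1)!/(n-k-1)!$, which is maximized over $k$ at $(n-1)!$. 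This crude bound already improves Chung's $k^d$ bound, but still falls short of $(2/3)(n-1)!$. To sharpen, I would carefully track the slack in each step: whenever $x_i$ has another in-neighbor carrying non-negligible $\phi$-mass, the inequality $\phi(x_i) \geq \phi(x_{i-1})/d^+(x_{i-1})$ is strict, and strong connectivity forces such extra contributions to appear often enough to absorb a multiplicative $2/3$ factor in the limit.

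For the \emph{lower bound}, I would exhibit an explicit extremal family $D_n$. A natural candidate is the graph on $\{v_1,\ldots,v_n\}$ containing the Hamilton path $v_1 \to v_2 \to \cdots \to v_n$ together with nearly all back edges $v_i \to v_j$, $j < i$, possibly modified near the high-index end so that $v_{n-1}$ and $v_n$ have clean single in-neighborhoods along the path. The out-degrees are then essentially $d^+(v_i) = i$, so that $\phi(v_i) \approx \phi(v_{i-1})/i$ and a leading factor of $(n-1)!$ in $\phi(v_1)/\phi(v_n)$ falls out immediately. A careful evaluation of the boundary corrections at the low-index end, where most $\phi$-mass is concentrated and the self-referential back edges contribute, should extract the asymptotic $2/3$ constant. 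The three extremal graphs (up to isomorphism) would come out of a short case analysis of the permissible boundary modifications that still attain the extremum.

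The \emph{main obstacle} is isolating the precise constant $2/3$ on both sides. The leading $(n-1)!$ falls out easily in either direction, but obtaining exactly $2/3$ requires (i) a careful accounting of slack losses in the upper-bound telescoping, where one must argue that strong connectivity systematically forces inequality slack at asymptotically many nodes, and (ii) a sufficiently accurate evaluation of the self-referential stationary recursion in the extremal graph, since the dominant lower-order correction enters multiplicatively through the $\phi$-values at the few highest-mass vertices (e.g.\ small computations suggest $\phi(v_2)/\phi(v_1)$ stabilizes at a fixed ratio independent of $n$, from which the $2/3$ should emerge). The full characterization of the three extremal graphs would then follow from a stability analysis of the optimization.
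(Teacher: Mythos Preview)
Your lower-bound construction is essentially the paper's $D_1$: the Hamilton path $v_1\to\cdots\to v_n$ together with all back edges $(v_j,v_i)$ for $1\le i<j\le n-1$, and a single return edge $(v_n,v_1)$. The explicit computation of $\phi$ for this graph does yield the constant $2/3$, and the paper carries it out much as you outline (solving the recursion $x_k=a_k x_1 - b_k$ and reading off $x_2/x_n$). One small inaccuracy: the ``boundary modification'' that distinguishes the three extremal graphs is at $v_n$ (which has out-degree $1$ or $2$ into $\{v_1,v_2\}$), not at the low-index end; and the $\phi$-maximum sits at $v_2$, not $v_1$.

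The genuine gap is in your upper bound. Your plan is to iterate $\phi(x_{i})\le d^+(x_i)\phi(x_{i+1})$ along a shortest $V_{\max}$--$V_{\min}$ path and then argue that ``strong connectivity forces extra in-neighbors often enough to absorb a multiplicative $2/3$.'' This is not how the constant arises, and I do not see how to make such an accumulation argument work: in the extremal graphs the slack is \emph{not} spread over many vertices but is concentrated entirely in the first two or three steps of the path. The paper's mechanism is quite different. It first proves $\mathrm{dist}(V_{\max},V_{\min})\le n-2$ (if the distance were $n-1$ then the first vertex on the path would have a unique out-neighbor, forcing that neighbor into $V_{\max}$ as well). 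If the distance is at most $n-3$ the falling-factorial bound already gives $\gamma\le\tfrac12(n-1)!$. The delicate case is distance exactly $n-2$, and here the paper does \emph{not} track slack: instead it shows that any graph in this regime which is not one of $D_1,D_2,D_3$ can be strictly improved by either adding a back edge $(v_t,v_s)$ (Section~5) or deleting an edge out of $v_n$ (Section~6), via a careful comparison of the Perron recursions before and after the modification. The ``short case analysis'' you anticipate is in fact the bulk of the paper. So your shortest-path product bound is the correct starting point (it is the paper's Propositions~1--2), but getting from $(n-1)!$ down to $\tfrac23(n-1)!$ requires a structural reduction to three specific graphs, not an asymptotic slack-counting argument.
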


This theorem is an immediate consequence of the following theorem which we prove.

\begin{theorem} \label{thm1}
The maximum of the principal ratio of the stationary distribution over all strongly connected directed graphs on $n\geq 3$ vertices is exactly
\begin{align*}
\gamma(n) &= \frac{2}{3}\left(\frac{n}{n-1}+\frac{1}{(n-1)!}\sum_{i=1}^{n-3}i!\right)(n-1)!.
\end{align*}

Moreover, $\gamma(n)$ is attained only by directed graphs $D_1, D_2,$ and $D_3$ defined as follows: $D_1, D_2$, and $D_3$ have vertex set  $\{v_1,v_2,\ldots,v_n\}$ and edge set
\[
E(D)=\{(v_i,v_{i+1}): \textrm{ for all } 1 \leq i \leq n-1)\} \cup  \{(v_j,v_i): \textrm{ for all } 1 \leq  i < j \leq n-1\} \cup S(D),
\]
where
\[
S(D) =
\begin{cases}
\{(v_n,v_1)\} & \text{for } D=D_1  \\
\{(v_n,v_2) \} & \text{for } D=D_2 \\
\{(v_n,v_1),(v_n,v_2)\} & \text{for } D=D_3\\
\end{cases}.
\]

The case for $n=5$ is illustrated in Figure $\ref{3const}$.
\end{theorem}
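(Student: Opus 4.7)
The plan is to combine an extremal structural analysis with a direct computation of the stationary distribution for a canonical family of candidates. Concretely, I would take an extremal digraph $D$ on $n$ vertices with $\gamma(D) = \gamma(n)$, use the Perron equation $\phi(v) = \sum_{u \to v} \phi(u)/d^+(u)$ to derive structural constraints, and then show by a sequence of edge-modification (addition/deletion/redirection) arguments that $D$ must have a very specific form: after relabeling so that the minimizer of $\phi$ is called $v_n$, there is a directed Hamiltonian path $v_1 \to v_2 \to \cdots \to v_n$, every backward edge $(v_j, v_i)$ with $1 \le i < j \le n-1$ is present, and only a limited set of edges emanates from $v_n$.

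For the structural part, I would start by analyzing the in-neighborhood of $v_n$. Stationarity at $v_n$ forces each in-neighbor $u$ to satisfy $\phi(u)/d^+(u) \le \phi(v_n)$, so together with strong connectivity and edge-swapping operations that do not decrease $\gamma$, one argues $v_n$ has a unique in-neighbor, call it $v_{n-1}$, which itself has a unique in-neighbor $v_{n-2}$, and so on, inductively building the Hamiltonian path. A parallel argument at the maximizer of $\phi$ shows it has many in-neighbors of small out-degree; adding a missing backward edge into a high-$\phi$ vertex cannot decrease $\gamma$, forcing the full set of backward edges $(v_j, v_i)$ for $1 \le i < j \le n-1$. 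These combined constraints leave only a small number of possibilities for the out-edges of $v_n$, encoded by the set $S(D)$.

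Given this reduced form, I would solve the stationary system directly. The equations $\phi(v_n) = \phi(v_{n-1})/(n-1)$, $\phi(v_{n-1}) = \phi(v_{n-2})/(n-2)$, and more generally $\phi(v_i) = \phi(v_{i-1})/(i-1) + \sum_{j=i+1}^{n-1}\phi(v_j)/j$ for $2 \le i \le n-2$, together with the equation for $\phi(v_1)$ that also incorporates any contribution from $v_n$, form a triangular system that can be unwound starting from $v_{n-1}$. This expresses every $\phi(v_i)$ as an explicit linear combination of $\phi(v_n)$ with coefficients built from products of reciprocal out-degrees. The resulting closed form for $\phi(v_2)/\phi(v_n)$ should match $\frac{2}{3}\bigl(\frac{n}{n-1}+\frac{1}{(n-1)!}\sum_{i=1}^{n-3}i!\bigr)(n-1)!$, and a short case analysis over the possible nonempty sets $S(D)$ should show the maximum is attained exactly when $S(D) = \{(v_n,v_1)\}$, $\{(v_n,v_2)\}$, or $\{(v_n,v_1),(v_n,v_2)\}$.

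The main obstacle is the structural reduction: showing an extremal $D$ must contain the complete set of backward edges on $\{v_1, \ldots, v_{n-1}\}$ and no ``forward shortcut'' edges $(v_i, v_j)$ with $j > i+1$, while simultaneously controlling how the maximum and minimum of $\phi$ shift under each edge modification and ensuring strong connectivity is preserved. The cleanest route is likely to express ratios $\phi(v_i)/\phi(v_n)$ as weighted sums over directed paths in the reverse digraph, each contributing a product of reciprocal out-degrees, so that adding a backward edge strictly increases the number of paths to the maximizer without hurting the denominator enough to decrease the ratio. Reconciling this exact formula with the asymptotic $\frac{2}{3}(n-1)!$ stated in Theorem~\ref{asymThm} is then a routine estimate using $\sum_{i=1}^{n-3} i! = (1+o(1))(n-3)!$.
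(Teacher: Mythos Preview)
Your overall strategy---structural reduction via edge modifications followed by an explicit computation---matches the paper's, but two of your structural steps have genuine gaps. First, the ``unique in-neighbor chain'' from $v_n$ does not go through as stated: the inequality $\phi(u)/d^+(u)\le\phi(v_n)$ for in-neighbors $u$ of $v_n$ says nothing about the \emph{number} of in-neighbors, and you give no concrete edge-swap that forces uniqueness while preserving strong connectivity and not decreasing $\gamma$. The paper bypasses this entirely with a distance argument: along any shortest path $u=w_0,\dots,w_k=v$ one has $d^+(w_i)\le n-k+i$, so $\phi(u)/\phi(v)\le (n-1)_k$; hence if $\dist(V_{\max},V_{\min})\le n-3$ then $\gamma(D)\le(n-1)_{n-3}=\tfrac12(n-1)!$, already below the target. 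Thus an extremal graph has $\dist(V_{\max},V_{\min})=n-2$; labeling a shortest path from a maximiser to a minimiser as $v_2,\dots,v_n$ and the remaining vertex as $v_1$, the shortest-path property itself rules out forward shortcuts, and a further short argument (Proposition~\ref{special}) pins down $N^+(v_1)=\{v_2\}$ and $(v_2,v_1)\in E$.

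Second, the claim that adding a missing backward edge $(v_t,v_s)$ cannot decrease $\gamma$ is the technical heart of the proof, and your reverse-digraph path-counting heuristic does not address the real difficulty: adding $(v_t,v_s)$ raises $d^+(v_t)$ from $d$ to $d+1$, which \emph{shrinks} the weight $1/d^+(v_t)$ on every existing walk through $v_t$, so one must show the new walks through $(v_t,v_s)$ outweigh this loss. The paper does this by writing $\phi(v_i)=f_i\,\phi(v_n)$ and $\psi(v_i)=g_i\,\psi(v_n)$ for the old and new graphs and proving, via a delicate downward induction from $i=t$ (Proposition~\ref{addedge0}), two-sided bounds on $(g_{t-k}-f_{t-k})/(t-1)_k$; this induction needs the a priori lower bounds $d^+(v_i)\ge\lfloor 2i/3\rfloor$ (forced by $\gamma(D)>\tfrac23(n-1)!$, Proposition~\ref{degree}) to control error sums of the form $\sum_j 1/(t-j)_2$. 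A parallel edge-deletion argument (Propositions~\ref{deleteedge0}--\ref{deleteedge1}) then handles the out-neighborhood of $v_n$. Your computational endgame is essentially the paper's Claim~A.
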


\begin{figure}[t]
\centering
\includegraphics[scale=.5]{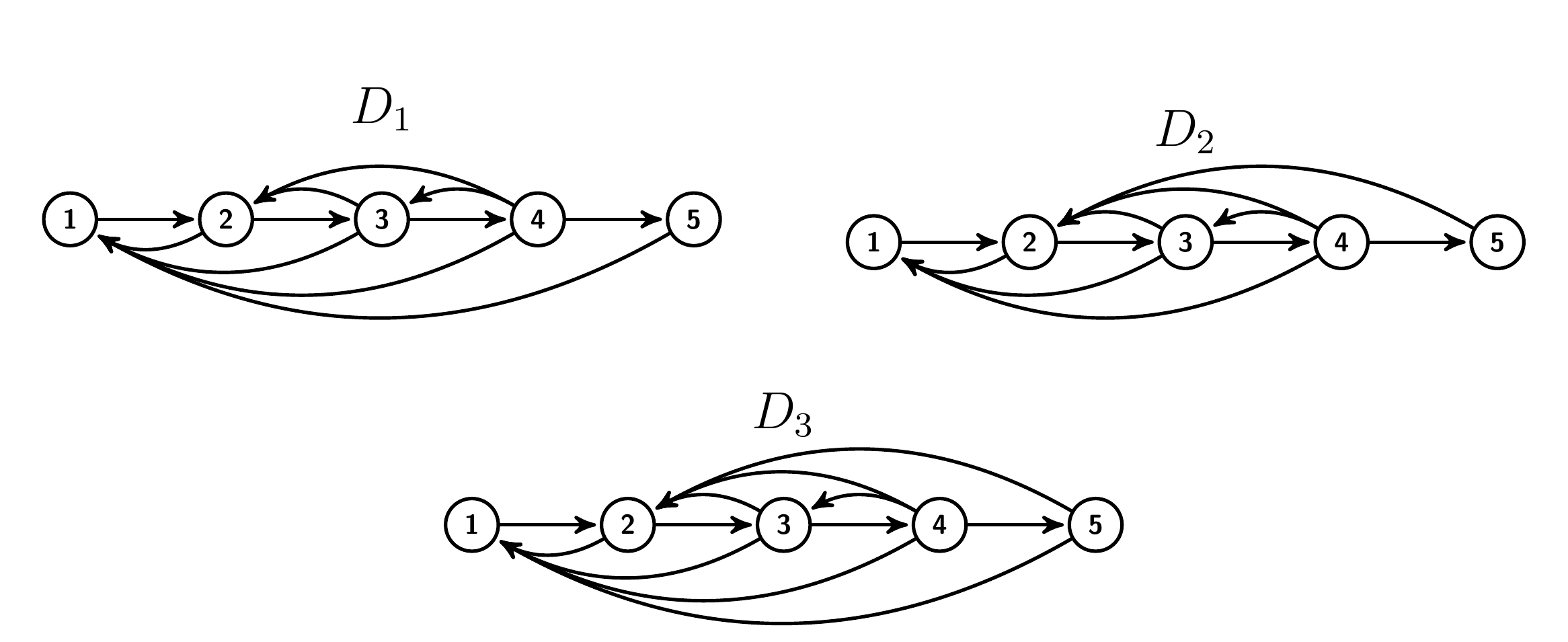}
\caption{The three constructions $D_1,D_2,D_3$ for $n=5$.} \label{3const}
\end{figure}

\begin{remark}
Note that the extremal graphs $D_1,D_2,D_3$ are not only strongly connected, but also aperiodic. Thus, Theorem 1 still holds if one restricts attention to stationary distributions of ergodic random walks.
\end{remark}

The proof of Theorem 2 follows from a sequence of propositions. The basic idea is as follows: we first show that if the principal ratio of a directed graph achieves the bound in Theorem \ref{thm1}, then the graph must necessarily satisfy a set of properties, which are described in Section $\ref{extremalStructure}$. In Sections $\ref{sec:addEdge}-\ref{sec:deleteEdge}$, we identify families of graphs that satisfy these properties, but nonetheless are not extremal. Namely, given an arbitrary member from this family, we describe how one can modify this graph by adding or deleting edges so that its principal ratio strictly increases. In Section $\ref{sec:mainThmProof}$, we apply these propositions to show that unless a given graph is one of  three graphs, it can be modified to increase its principal ratio. Finally, after establishing that all three of these extremal graphs indeed have the same principal ratio, we finish the proof and we explicitly compute the stationary distribution of one of these extremal graphs.

\begin{remark}
Note that the graphs $D_1$ and $D_2$ are proper subgraphs of $D_3$. While all three graphs have different stationary distributions, their principal ratios are nonetheless equal.
\end{remark}

\section{The structure of the extremal graphs}\label{extremalStructure}

We assume all directed graphs $D$ are strongly connected. For two vertices $u$ and $v$, the distance $\dist(u,v)$ is the number of edges in a shortest directed path from $u$ to $v$. For two subsets $V_1,V_2$,  the directed distance $\dist(V_1,V_2)$ from $V_1$ to $V_2$ is defined as $\min\{\dist(u,v): u \in V_1 \textrm{ and } v \in V_2\}$.  For a directed graph $D$, let $\phi$ be the (left) eigenvector corresponding to the eigenvalue 1 for the transition probability matrix $P$. We define two subsets of $V(D)$ with respect to $\phi$ as follows.
\[
V_{\max}=\{v \in V(D) : \max_{u \in V(D)} \phi(u)=\phi(v)\}.
\]
\[
V_{\min}=\{v \in V(D) : \min_{u \in V(D)} \phi(u)=\phi(v)\}.
\]

We will establish a number of useful facts that relate the ratio of values of vertices of the Perron vector to the distance between those vertices.

\begin{proposition} \label{keylemma1}
If $v_1,v_2,\ldots,v_k$ is a path of length $k-1$ from $v_1$ to $v_k$, then
\[
\frac{\phi(v_1)}{\phi(v_k)} \leq \prod_{i=1}^{k-1} d^+(v_i).
\]
\end{proposition}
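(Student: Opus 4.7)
The plan is to derive a one-step inequality from the stationarity equation $\phi P = \phi$, then telescope it along the given path. Since $\phi$ is a left eigenvector with eigenvalue $1$, reading off coordinate $v_{i+1}$ gives
\[
\phi(v_{i+1}) = \sum_{u \in N^-(v_{i+1})} \frac{\phi(u)}{d^+(u)}.
\]
Because $\phi$ has strictly positive entries (by Perron--Frobenius, as noted earlier in the excerpt) and $v_i$ is one of the in-neighbors of $v_{i+1}$, dropping all other (non-negative) summands yields the key local bound
\[
\phi(v_{i+1}) \geq \frac{\phi(v_i)}{d^+(v_i)}, \qquad \text{i.e.,}\qquad \phi(v_i) \leq d^+(v_i)\,\phi(v_{i+1}).
\]

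Next I would apply this inequality along the entire path $v_1, v_2, \ldots, v_k$ and multiply the $k-1$ resulting inequalities together (equivalently, iterate them). The intermediate values $\phi(v_2), \ldots, \phi(v_{k-1})$ cancel telescopically, leaving
\[
\phi(v_1) \leq \phi(v_k)\prod_{i=1}^{k-1} d^+(v_i),
\]
which is the desired inequality after dividing by $\phi(v_k) > 0$.

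There is essentially no obstacle in this proof: the entire argument rests on the observation that dropping nonnegative terms from the stationarity identity at each vertex gives a valid lower bound, and this dropping is lossless precisely when $v_{i+1}$ has a unique in-neighbor. The main thing to be careful about is to explicitly note that $\phi > 0$ (so division is legitimate and inequalities are preserved) and that each $d^+(v_i) \geq 1$ because the edge $(v_i,v_{i+1})$ exists. No assumption of aperiodicity is needed; strong connectivity suffices to guarantee the positivity of $\phi$ invoked in the step above.
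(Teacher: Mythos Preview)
Your proof is correct and is essentially the same argument as the paper's. The only cosmetic difference is that the paper bundles the $k-1$ one-step bounds into a single application of $\phi = \phi P^{k-1}$ together with the lower bound $P^{k-1}(v_1,v_k)\ge \prod_{i=1}^{k-1} 1/d^+(v_i)$ coming from the given path, whereas you obtain the same product by telescoping the one-step inequality $\phi(v_{i+1})\ge \phi(v_i)/d^+(v_i)$.
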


\begin{proof}
From $\phi P^k=\phi$, we obtain
\[
\phi(v_k)=\sum_{z \in V(D)} \phi(z)P^k(z,v_k) \geq  \phi(v_1)P^k({v_1,v_k}).
\]
By considering the  path $v_1,v_2,\ldots,v_k$, we have
\[
P^k(v_1,v_k) \geq \prod_{i=1}^{k-1} \frac{1}{d^+(v_i)}.
\]
Equivalently,   $\tfrac{\phi(v_1)}{\phi(v_k)} \leq \prod_{i=1}^{k-1} d^+(v_i)$.

\end{proof}
\begin{proposition} \label{keylemma2}
If $\mbox{\emph{dist}}(u,v) = k$, then
\[
 \frac{\phi(u)}{\phi(v)} \leq (n-1)_k,
\]
where $(n-1)_k= (n-1) \cdot (n-2) \cdots (n-k)$ is the falling factorial.
\end{proposition}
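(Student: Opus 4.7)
My plan is to reduce the claim to Proposition \ref{keylemma1}. Fix a shortest directed path $u = v_0 \to v_1 \to \cdots \to v_k = v$ realizing $\dist(u,v) = k$. Applying Proposition \ref{keylemma1} to this path immediately gives
$\phi(u)/\phi(v) \leq \prod_{i=0}^{k-1} d^+(v_i)$,
so the task reduces to showing that the product of out-degrees along the shortest path is at most $(n-1)(n-2)\cdots(n-k)$.

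The crux of the argument is that the shortest-path hypothesis restricts each $d^+(v_i)$. Since $D$ is simple, $v_i \notin N^+(v_i)$. Moreover, I claim that for every index $j$ with $i+2 \leq j \leq k$, the vertex $v_j$ cannot lie in $N^+(v_i)$: otherwise the walk $v_0 \to \cdots \to v_i \to v_j \to \cdots \to v_k$ would be a $u$-to-$v$ walk of length $i + 1 + (k - j) < k$, contradicting the minimality of our chosen path. Hence at least $k - i$ distinct vertices (namely $v_i, v_{i+2}, v_{i+3}, \ldots, v_k$) are excluded from $N^+(v_i)$, yielding $d^+(v_i) \leq n - (k - i) = n - k + i$.

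Multiplying these bounds then gives $\prod_{i=0}^{k-1}(n-k+i) = (n-k)(n-k+1)\cdots(n-1) = (n-1)_k$, which is precisely the target inequality. I do not anticipate any substantive obstacle; the argument is essentially Proposition \ref{keylemma1} combined with a single shortcut-avoidance observation. As a sanity check, the endpoint case $i = k-1$ reduces to $d^+(v_{k-1}) \leq n-1$, which is automatic and correctly accounts for the leading factor $(n-1)$ in the target product, while the opposite endpoint $i = 0$ gives the tightest constraint $d^+(v_0) \leq n-k$, matching the smallest factor.
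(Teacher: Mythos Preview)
Your proof is correct and follows essentially the same approach as the paper: take a shortest path, use the shortcut-avoidance observation (together with the absence of loops) to bound $d^+(v_i)\le n-k+i$, and then invoke Proposition~\ref{keylemma1}. The only difference is cosmetic---you spell out the count of excluded vertices a bit more explicitly than the paper does.
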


\begin{proof}
Let $\mathcal P=\{u=v_0,v_1,\ldots,v_k=v\}$ be a shortest path from $u$ to $v$.
 For all $0 \leq i \leq k-2$ and $ j \geq i+2$, we note that $(v_i,v_j)$ is {\em not} a directed edge. Since $D$ has no loops, we have $d^+(v_i) \leq n-k+i$ for all $0 \leq i \leq k-1$. The proposition now follows by applying Proposition \ref{keylemma1}.
\end{proof}

\begin{proposition}\label{reducedist}
For any directed graph $D$ with $n$ vertices, we have $\dist(V_{\max},V_{\min}) \leq n-2$.
\end{proposition}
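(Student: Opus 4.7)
The plan is to proceed by contradiction and exploit the stationary equation at a carefully chosen vertex. Suppose $\dist(V_{\max},V_{\min}) \geq n-1$. Since $D$ is strongly connected on $n$ vertices, its diameter is at most $n-1$, so the distance must be exactly $n-1$. Pick $u \in V_{\max}$, $v \in V_{\min}$ witnessing this distance and fix a shortest directed path $u = v_0, v_1, \ldots, v_{n-1} = v$; since it has $n-1$ edges and no vertex can repeat, it is necessarily Hamiltonian.

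Next I would extract the structural constraint that $d^+(v_0) = 1$. This already sits inside the proof of Proposition \ref{keylemma2}: for a shortest path of length $k=n-1$ the bound $d^+(v_i) \leq n-k+i$ with $i=0$ gives $d^+(v_0) \leq 1$, and strong connectivity plus the path forces equality, with $v_1$ being the unique out-neighbor of $v_0$. Intuitively, any edge $(v_0,v_j)$ with $j \geq 2$ would shortcut the path and contradict $\dist(u,v)=n-1$.

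The finishing step is to substitute into the stationary identity at $v_1$:
\[
\phi(v_1) \;=\; \sum_{z \in N^-(v_1)} \frac{\phi(z)}{d^+(z)} \;\geq\; \frac{\phi(v_0)}{d^+(v_0)} \;=\; \phi(v_0),
\]
where the inequality uses $v_0 \in N^-(v_1)$ together with positivity of $\phi$. Since $v_0 \in V_{\max}$ we also have $\phi(v_1) \leq \phi(v_0)$, so $\phi(v_1) = \phi(v_0)$ and hence $v_1 \in V_{\max}$. But the path suffix $v_1,v_2,\ldots,v_{n-1}$ shows $\dist(v_1,v) \leq n-2$, contradicting $\dist(V_{\max},V_{\min}) = n-1$.

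The main obstacle, I expect, is resisting the temptation to attack the claim through the multiplicative bound of Proposition \ref{keylemma2} (which only yields $\phi(u)/\phi(v) \leq (n-1)!$ and is entirely consistent with $\dist(u,v)=n-1$). The decisive observation is the asymmetry at the two endpoints of a shortest Hamiltonian path: the starting vertex is forced to have out-degree exactly $1$, so in the stationary equation at $v_1$ the single term contributed by $v_0$ already saturates the maximum, propagating $V_{\max}$ one step forward along the path and breaking the distance assumption.
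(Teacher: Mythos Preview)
Your argument is correct and follows essentially the same route as the paper: assume the distance is $n-1$, take a shortest (hence Hamiltonian) path, observe that the starting vertex has $v_1$ as its sole out-neighbor, and use the stationary equation at $v_1$ to force $\phi(v_1)\ge\phi(v_0)$, placing $v_1$ in $V_{\max}$ and contradicting the distance assumption. The only cosmetic difference is indexing ($v_0,\dots,v_{n-1}$ versus the paper's $v_1,\dots,v_n$).
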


\begin{proof}
Suppose  $\dist(V_{\max},V_{\min}) =n-1=\dist(u,v)$  for some  $u \in V_{\max}$ and $v \in V_{\min}$. Let ${\cal P}=v_1,v_2,\ldots,v_n$ be a shortest directed path of length $n-1$ such that $v_1=u$ and $v_n=v$. Since $\cal P$ is a shortest directed path, we note $v_2$ is the only outneighbor of $v_1$.  From $\phi P=\phi$, we obtain
 \[
 \phi(v_2)=\phi(v_1)+\sum_{\underset{v_j \rightarrow v_2}{j \geq 3}} \frac{\phi(v_j)}{d^+(v_j)}.
 \]
Thus $\phi(v_2) \geq \phi(v_1)$ and so $\dist(V_{\max},V_{\min}) \leq \dist(v_2,v_n) \leq n-2$, which is a contradiction.
\end{proof}


\begin{proposition}\label{smalldist}
For a directed graph $D$ with $n$ vertices,   if $\dist(V_{\max},V_{\min}) \leq n-3$, then $\gamma(D) \leq \tfrac{1}{2}(n-1)!$.
\end{proposition}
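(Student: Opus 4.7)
The plan is to observe that this is a direct consequence of Proposition \ref{keylemma2}, together with a bound on the falling factorial $(n-1)_k$.

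First I would pick $u \in V_{\max}$ and $v \in V_{\min}$ achieving the distance $\dist(V_{\max},V_{\min}) = k$, so that by definition $\gamma(D) = \phi(u)/\phi(v)$. Applying Proposition \ref{keylemma2}, this ratio is at most $(n-1)_k = (n-1)!/(n-1-k)!$. Under the hypothesis $k \leq n-3$ we have $n-1-k \geq 2$, hence $(n-1-k)! \geq 2$, which gives
\[
\gamma(D) \;\leq\; \frac{(n-1)!}{(n-1-k)!} \;\leq\; \frac{(n-1)!}{2}.
\]

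There is essentially no main obstacle here: the work was done in Proposition \ref{keylemma2}, and this proposition is simply the numerical specialization of that bound at $k = n-3$. The reason the hypothesis is stated as $\dist(V_{\max},V_{\min}) \leq n-3$ rather than $\leq n-2$ is exactly that we need the denominator $(n-1-k)!$ to be at least $2$; at $k = n-2$ we only get $(n-1-k)! = 1$ and the falling factorial equals $(n-1)!$ itself. This proposition, together with Proposition \ref{reducedist} (which rules out $\dist(V_{\max},V_{\min}) = n-1$), will later be used to argue that any graph attaining the extremal value of $\gamma(n)$ must satisfy $\dist(V_{\max},V_{\min}) = n-2$ exactly.
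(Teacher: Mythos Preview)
Your proof is correct and follows the same approach as the paper: pick $u\in V_{\max}$, $v\in V_{\min}$ realizing the distance, apply Proposition~\ref{keylemma2}, and bound the resulting falling factorial. The paper's write-up is terser (it just notes $\gamma(D)\leq (n-1)_{n-3}=\tfrac12(n-1)!$, implicitly using that $(n-1)_k$ is nondecreasing in $k$), whereas you make the monotonicity explicit via $(n-1)_k=(n-1)!/(n-1-k)!$; the arguments are otherwise identical.
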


\begin{proof}
Let $u \in V_{\max}$ and $v \in V_{\min}$ such that $\dist(u,v)=\dist(V_{\max},V_{\min})$.  By Proposition \ref{keylemma2}, we have $\gamma(D) \leq (n-1)_{n-3}=\tfrac{1}{2}(n-1)!$.
\end{proof}

\begin{proposition} \label{degree}
Let $D$ be a strongly connected  directed graph with vertex set $\{v_1,\ldots,v_n\}$. Assume $v_1,v_2,\ldots,v_n$ is a shortest directed path from $v_1$ to $v_n$. Suppose $v_2 \in V_{\max}$ and $v_n \in V_{\min}$.  If $\gamma(D) > \tfrac{2}{3}(n-1)!$, then we have $N^+(v_2)=\{v_1,v_3\}$, $N^+(v_3)=\{v_1,v_2,v_4\}$, and $d^+(v_i) \geq \lfloor \tfrac{2i}{3} \rfloor$ for $4 \leq i \leq n-1$.
\end{proposition}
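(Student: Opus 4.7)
The plan is to apply Proposition 1 to subpaths of the shortest path $v_1,v_2,\ldots,v_n$ and use the hypothesis $\gamma(D)>\tfrac{2}{3}(n-1)!$ to rule out every degree configuration other than the asserted one. I would first note that since $v_1,\ldots,v_n$ is a shortest directed path, any arc $(v_i,v_j)$ with $j\ge i+2$ would yield a strictly shorter path, so no such arc exists; together with the absence of loops, this forces the candidate out-neighbors of $v_j$ to lie in $\{v_1,\ldots,v_{j-1},v_{j+1}\}$, and hence $d^+(v_j)\le j$ for every $2\le j\le n-1$. Throughout, the workhorse inequality will be Proposition 1 applied to (sub)paths ending at $v_n$: since $v_2\in V_{\max}$ and $v_n\in V_{\min}$, any such bound controls $\gamma(D)$ directly.

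To pin down $N^+(v_2)$, observe that $d^+(v_2)\in\{1,2\}$ with candidates $\{v_1,v_3\}$. If $d^+(v_2)=1$, then because $(v_2,v_3)$ is an edge of the prescribed path, the lone out-neighbor must be $v_3$. The stationary equation at $v_3$ then receives the contribution $\phi(v_2)/d^+(v_2)=\phi(v_2)$, so $\phi(v_3)\ge\phi(v_2)$ and $v_3\in V_{\max}$. Proposition 1 applied to the subpath $v_3,\ldots,v_n$ now gives
\[
\gamma(D)=\frac{\phi(v_3)}{\phi(v_n)}\le\prod_{j=3}^{n-1}d^+(v_j)\le\prod_{j=3}^{n-1}j=\tfrac{1}{2}(n-1)!,
\]
contradicting the hypothesis. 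Hence $d^+(v_2)=2$ and $N^+(v_2)=\{v_1,v_3\}$. For $v_3$, the candidate out-neighbors are $\{v_1,v_2,v_4\}$, so $d^+(v_3)\le 3$; if $d^+(v_3)\le 2$, then Proposition 1 on $v_2,v_3,\ldots,v_n$ gives
\[
\gamma(D)\le d^+(v_2)\cdot d^+(v_3)\cdot\prod_{j=4}^{n-1}j\le 2\cdot 2\cdot\frac{(n-1)!}{6}=\tfrac{2}{3}(n-1)!,
\]
again a contradiction, so $d^+(v_3)=3$ and $N^+(v_3)=\{v_1,v_2,v_4\}$.

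Finally, for $4\le i\le n-1$, I would assume for contradiction that $d^+(v_i)\le\lfloor 2i/3\rfloor-1$. Keeping the ceiling $d^+(v_j)\le j$ at every other index $j\in\{2,\ldots,n-1\}$, Proposition 1 on $v_2,\ldots,v_n$ gives
\[
\gamma(D)\le(\lfloor 2i/3\rfloor-1)\cdot\frac{(n-1)!}{i}<\tfrac{2}{3}(n-1)!,
\]
since $\lfloor 2i/3\rfloor-1<2i/3$, contradicting $\gamma(D)>\tfrac{2}{3}(n-1)!$. The only delicate point is the $d^+(v_2)=1$ branch, where one must recognize that the prescribed shortest path forces $v_3$ (not $v_1$) to be the unique out-neighbor of $v_2$; once that observation is in place, the rest reduces to routine factorial comparisons against the threshold $\tfrac{2}{3}(n-1)!$.
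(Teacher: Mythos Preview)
Your proof is correct and follows essentially the same approach as the paper: both use the shortest-path constraint $d^+(v_j)\le j$ together with the chain inequality $\phi(v_{j+1})\ge \phi(v_j)/d^+(v_j)$ (equivalently, Proposition~\ref{keylemma1} along the subpath $v_2,\ldots,v_n$) to bound $\gamma(D)$ by a product of out-degrees, and then argue that any deviation from the asserted degrees forces this product below the threshold. The only cosmetic difference is that in the $d^+(v_2)=1$ branch you route through the observation $v_3\in V_{\max}$, whereas the paper simply absorbs the factor $d^+(v_2)=1$ into the product directly; the resulting bound $\tfrac{1}{2}(n-1)!$ is identical.
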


\begin{proof}

Since $v_1,\dots, v_n$ is a shortest path from $v_1$ to $v_n$, we have $d^+(v_i)\leq i$. To prove $N^+(v_2)=\{v_1,v_3 \}$ and $N^+(v_3)=\{v_1,v_2,v_4 \}$, it therefore suffices to show $d^+(v_2)=2$ and $d^+(v_3)=3$. From $\phi P=\phi$, we have for $1\leq j \leq n-1$,

\begin{align*}
\phi(v_{j+1})=\frac{\phi(v_j)}{d^+(v_j)}+ \sum_{\substack{i \geq j+2 \\ v_i \to v_j}} \frac{\phi(v_i)}{d^+(v_i)}&\geq \frac{\phi(v_j)}{d^+(v_j)} \geq \frac{\phi(v_j)}{j}.
\end{align*}

If $d^+(v_2)=1$, then applying the above bound we have $\phi(v_n)\geq\tfrac{\phi(v_2)}{(n-1)\dots 4\cdot3}$, yielding the contradiction $\gamma(D)\leq \frac{1}{2}(n-1)!$. Similarly, if $d^+(v_3)\leq 2$, or if $d^+(v_i) < \lfloor \tfrac{2i}{3} \rfloor$ for some $i$ where $4 \leq i \leq n-1$, then applying the above bound yields $\gamma(D)\leq \frac{2}{3}(n-1)!$.

\end{proof}

\begin{proposition} \label{special}
Let $D$ be a strongly connected  directed graph with vertex set $\{v_1,v_2,\ldots,v_n\}$. Assume $v_2,\ldots,v_n$ is a shortest directed path from $v_2$ to $v_n$, where  $v_2 \in V_{\max}$ and $v_n \in V_{\min}$ such that $\dist(V_{\max},V_{\min})=n-2$. If $\gamma(D) > \tfrac{2}{3}(n-1)!$, then we have $(v_1,v_2),(v_2,v_1) \in E(D)$ and $v_2$ is the only out-neighbor of $v_1$.
\end{proposition}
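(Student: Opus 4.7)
The plan is to first recapitulate the out-degree constraints around the shortest path, extract a lower bound on $\phi(v_1)$, constrain $N^+(v_1)$ via a shortcut argument, and finally rule out all choices of $N^+(v_1)$ other than $\{v_2\}$. First I would observe that the argument in Proposition~\ref{degree} applies verbatim to the shortest directed path $v_2, v_3, \ldots, v_n$ of length $n-2$: any $v_i$ on this path can only send out-edges to $v_1$, to some $v_j$ with $2 \leq j \leq i-1$ (backward on the path), or to $v_{i+1}$ (forward) without creating a shortcut, so $d^+(v_i) \leq i$. Combining this with $\gamma(D) > \tfrac{2}{3}(n-1)!$ forces $d^+(v_2) = 2$ and $d^+(v_3) = 3$, whence $N^+(v_2) = \{v_1, v_3\}$ and $N^+(v_3) = \{v_1, v_2, v_4\}$; in particular $(v_2, v_1), (v_3, v_1) \in E(D)$. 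Using the stationary equations together with $\phi(v_3) \geq \phi(v_2)/2$, I then obtain
\begin{align*}
\phi(v_1) \geq \frac{\phi(v_2)}{2} + \frac{\phi(v_3)}{3} \geq \frac{2\phi(v_2)}{3}.
\end{align*}

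Second, I would constrain $N^+(v_1)$. Since $(v_2, v_1) \in E(D)$, any edge $v_1 \to v_k$ with $k \geq 5$ creates a walk $v_2 \to v_1 \to v_k \to v_{k+1} \to \cdots \to v_n$ of length $n - k + 2 \leq n - 3$, contradicting $\dist(v_2, v_n) = n-2$. Hence $N^+(v_1) \subseteq \{v_2, v_3, v_4\}$, leaving six non-$\{v_2\}$ possibilities to rule out. Whenever $v_4 \in N^+(v_1)$, Proposition~\ref{keylemma1} applied to the length-$(n-2)$ simple path $v_2, v_1, v_4, v_5, \ldots, v_n$ gives $\gamma(D) \leq 2 \cdot d^+(v_1) \cdot 4 \cdot 5 \cdots (n-1) = d^+(v_1)(n-1)!/3$, which is at most $\tfrac{2}{3}(n-1)!$ for $d^+(v_1) \leq 2$; this excludes $\{v_4\}$, $\{v_3, v_4\}$, and $\{v_2, v_4\}$. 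The case $N^+(v_1) = \{v_3\}$ is excluded because then all of $\phi(v_1)$ lands on $v_3$, yielding $\phi(v_3) \geq \phi(v_2)/2 + \phi(v_1) \geq 7\phi(v_2)/6$ and contradicting $v_2 \in V_{\max}$. For $N^+(v_1) = \{v_2, v_3, v_4\}$, the two distinct length-$(n-2)$ walks from $v_2$ to $v_n$ (the direct path, and $v_2 \to v_1 \to v_4 \to \cdots \to v_n$) give $P^{n-2}(v_2, v_n) \geq 2/(6\, d^+(v_4) \cdots d^+(v_{n-1}))$ and hence $\gamma(D) \leq (n-1)!/2$.

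The subtlest case, and the main obstacle, is $N^+(v_1) = \{v_2, v_3\}$, where no single-path instance of Proposition~\ref{keylemma1} is sharp enough. I would first bootstrap the mutual stationary inequalities $\phi(v_1) \geq \phi(v_2)/2 + \phi(v_3)/3$ and $\phi(v_3) \geq \phi(v_2)/2 + \phi(v_1)/2$ (the latter using $v_1 \to v_3$ with $d^+(v_1) = 2$) to obtain $\phi(v_1) \geq 4\phi(v_2)/5$. I would then aggregate the length-$(n-2)$ walks $v_1 \to v_3 \to v_4 \to \cdots \to v_n$ and $v_2 \to v_3 \to v_4 \to \cdots \to v_n$ into
\begin{align*}
\phi(v_n) \geq \phi(v_1) P^{n-2}(v_1, v_n) + \phi(v_2) P^{n-2}(v_2, v_n) \geq \frac{\phi(v_1) + \phi(v_2)}{6 \, d^+(v_4) \cdots d^+(v_{n-1})},
\end{align*}
which yields $\gamma(D) \leq \tfrac{5}{9}(n-1)! < \tfrac{2}{3}(n-1)!$. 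This final contradiction forces $N^+(v_1) = \{v_2\}$, completing the proof.
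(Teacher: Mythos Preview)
Your proof is correct and follows the same overall strategy as the paper---establish $(v_2,v_1)\in E(D)$, constrain $N^+(v_1)\subseteq\{v_2,v_3,v_4\}$ via the shortcut argument, lower-bound $\phi(v_1)$ from the stationary equation, and then exclude the unwanted out-neighborhoods by lower-bounding $\phi(v_n)$ through walk probabilities---but your execution is more elaborate than needed. The paper never invokes Proposition~\ref{degree} or the edge $(v_3,v_1)$: it obtains $(v_2,v_1)\in E(D)$ directly (otherwise $v_3$ is the unique out-neighbor of $v_2$, so $\phi(v_3)\geq\phi(v_2)$ and $\dist(V_{\max},V_{\min})\leq n-3$), and thereafter uses only the crude bound $\phi(v_1)\geq\phi(v_2)/2$. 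A single two-source estimate
\[
\phi(v_n)\;\geq\;\frac{\phi(v_2)}{d^+(v_2)\cdots d^+(v_{n-1})}\;+\;\frac{\phi(v_1)}{d^+(v_1)\,d^+(v_j)\cdots d^+(v_{n-1})}
\]
then disposes of ``$v_4\in N^+(v_1)$'' (take $j=4$, use $d^+(v_1)\leq 3$) and subsequently ``$v_3\in N^+(v_1)$'' (take $j=3$, now $d^+(v_1)\leq 2$) in two strokes, covering all six of your subcases at once. In particular, your bootstrap to $\phi(v_1)\geq 4\phi(v_2)/5$ in the $N^+(v_1)=\{v_2,v_3\}$ case is unnecessary: plugging even $\phi(v_1)\geq\phi(v_2)/2$ into your own final displayed inequality already yields $\gamma(D)\leq\tfrac{2}{3}(n-1)!$, which contradicts the strict hypothesis. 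Your separate treatments of $\{v_3\}$ via a $V_{\max}$ violation and of $\{v_2,v_3,v_4\}$ via two walks from $v_2$ are valid, but the paper's uniform two-source bound subsumes them.
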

\begin{proof}
We first show $(v_2,v_1)$ must be an edge. Suppose not. Then $v_3$ will be the only outneighbor of $v_2$. The equation $\phi P=\phi$  gives
\[
\phi(v_3)=\phi(v_2)+\sum_{\underset{v_j \to v_3}{j \geq 4}} \frac{\phi(v_j)}{d^+(v_j)}.
\]
Therefore, $\phi(v_3) \geq \phi(v_2)$ which yields that $v_3 \in V_{\max}$ and $\dist(V_{\max},V_{\min}) \leq n-3$. By Proposition \ref{smalldist}, we have $\gamma(D) \leq \tfrac{1}{2}(n-1)!$ which is a contradiction.  Therefore, $(v_2,v_1)$ is an edge.

Next, we will show $N^+(v_1)=\{v_2 \}$. Since we assume $v_2,\ldots,v_n$ is a shortest path from $v_2$ to $v_n$, we have $N^+(v_2)=\{v_1,v_3\}$ and $N^+(v_1) \subseteq \{v_2,v_3,v_4 \}$. Moreover, we have $d^+(v_i) \leq i$ for $3 \leq i \leq n$ as $N^+(v_i) \subseteq \{v_1,\ldots,v_{i-1}\} \cup \{v_{i+1}\}$. Lastly, we note that from $\phi P= \phi$, we have $\phi(v_1) \geq \frac{1}{2} \phi(v_2)$. Assume $v_4 \in N^+(v_1)$. Then by considering directed paths $v_1,v_4,\dots,v_n$ and $v_2,v_3,\dots,v_n$ and applying Proposition \ref{keylemma1}, we have

\begin{align*}
\phi(v_n) & \geq \frac{\phi(v_2)}{d^+(v_2)\dots d^+(v_{n-1})} + \frac{\phi(v_1)}{d^+(v_1)\cdot d^+(v_4)\dots d^+(v_{n-1})} \geq \frac{\phi(v_2)}{(n-1)!} + \frac{\phi(v_1)}{(n-1)_{n-3}} \geq \frac{2 \phi(v_2)}{(n-1)!},
\end{align*}

\noindent yielding the contradiction $\gamma(D)=\frac{\phi(v_2)}{\phi(v_n)}\leq\frac{1}{2}(n-1)!$. So, $N^+(v_1) \subseteq \{v_2,v_3 \}$. Assume $v_3 \in N^+(v_1)$. Again, by considering directed paths $v_1,v_3,\dots,v_n$ and $v_2,v_3,\dots,v_n$ and applying Proposition \ref{keylemma1}, we similarly obtain

\begin{align*}
\phi(v_n) & \geq \frac{\phi(v_2)}{d^+(v_2)\dots d^+(v_{n-1})} + \frac{\phi(v_1)}{d^+(v_1)\cdot d^+(v_3)\dots d^+(v_{n-1})} \geq \frac{\phi(v_2)}{(n-1)!} + \frac{\phi(v_1)}{(n-1)_{n-2}} \geq \frac{3 \phi(v_2)}{2(n-1)!},
\end{align*}

\noindent yielding the contradiction $\gamma(D)=\frac{\phi(v_2)}{\phi(v_n)}\leq \frac{2}{3}(n-1)!$. Thus $v_3 \not \in N^+(v_1)$ and since $D$ is strongly connected, $N^+(v_1)\not= \varnothing$. Therefore, $N^+(v_1)=\{v_2 \}$.

\end{proof}

\section{Adding edges to increase the principal ratio}\label{sec:addEdge}

Based on Propositions 1-6, we consider the definition of the following family of graphs. An extremal graph must satisfy (i)-(iv) in the definition below.



\begin{definition} \label{def:Dn}
For each  $n$,  let $\D_n$ be a family of directed graphs where each $D \in \D_n$ on vertex set $\{v_1,\dots,v_n\}$ satisfies the following properties:
\begin{description}
\item [(i)] The shortest path from $v_1$ to $v_n$ is of length $n-1$ and is denoted by $v_1,v_2,\ldots,v_n$.
\item[(ii)] For $i \in \{2,3\},  d^+(v_i)=i$.
\item[(iii)] For each $4 \leq i \leq n-1 $, we have $d^+(v_i) \geq  \lfloor \tfrac{2i}{3} \rfloor$.
\item[(iv)] $v_2 \in V_{\max}$, $v_n \in V_{\min}$, and  $\dist( V_{\max}, V_{\min})=\dist(v_2,v_n)=n-2$.
\item[(v)] There exist $i$ and $j$ such that $(v_j,v_i)$ is not an edge where $4 \leq j \leq n-1$ and $1 \leq i \leq j-1$.
\end{description}
\end{definition}

For each $D\in \D_n$, we now define an associated graph $D^+$ identical to $D$ except for the addition of a single edge.

\begin{definition} \label{DPlus}
For a given $D \in \D_n$, let $4 \leq t \leq n$ denote the smallest integer and  $s<t$ the largest integer such that $(v_t,v_s)$ is not an edge of $D$. Define $D^+$  as the directed graph with the same vertex set as $D$ and with edge set $E(D) \cup \{(v_t,v_s)\}$, as illustrated in Figure $\ref{fig:DPlus}$.
\end{definition}

\begin{figure}[t]
\centering
\includegraphics[scale=.5]{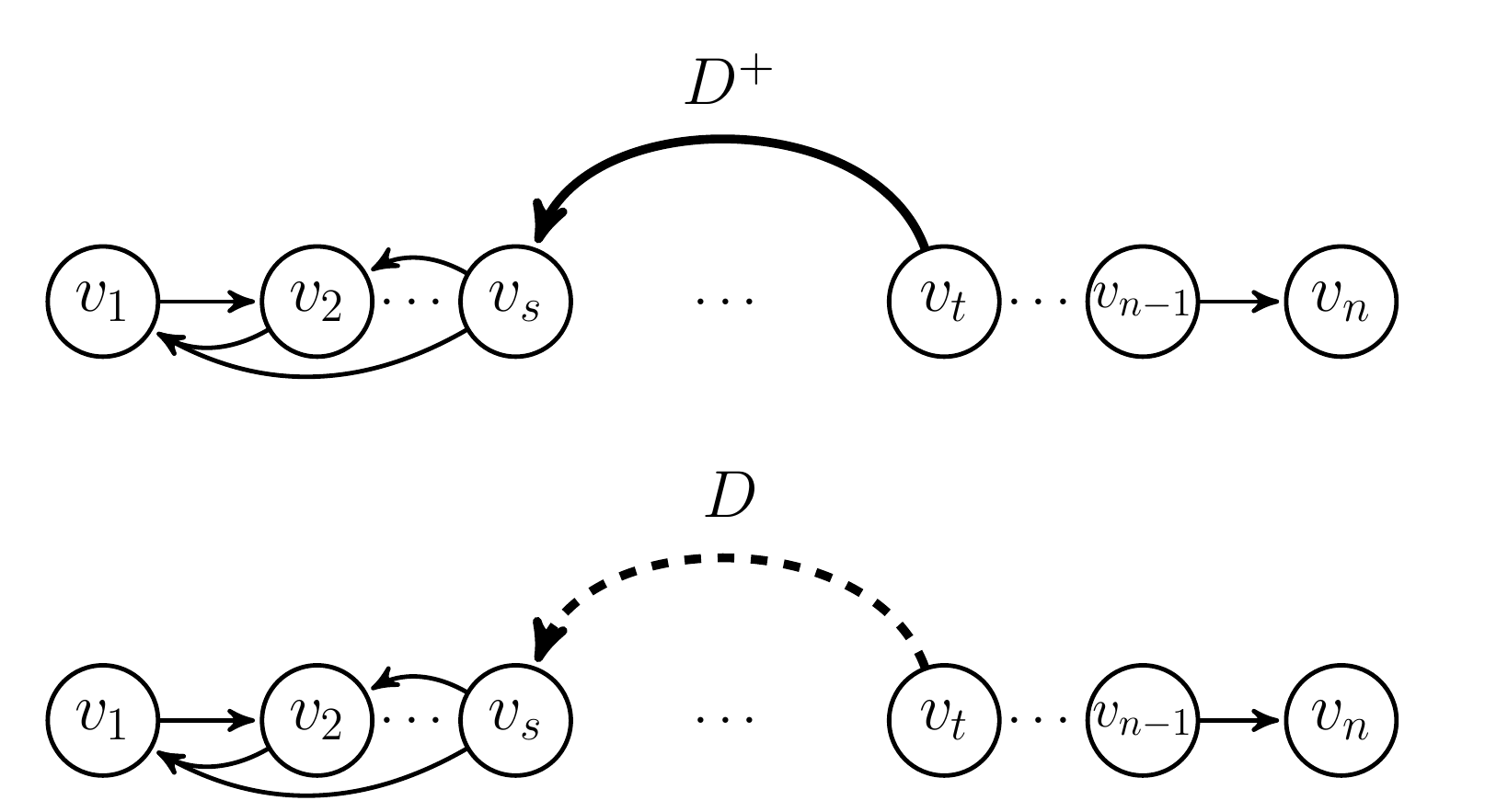}
\caption{$D$ and $D^+$. A dashed edge indicates the absence of that edge.} \label{fig:DPlus}
\end{figure}

For a given $D \in \D_n$, we wish to compare the principal ratios of $D$ and $D^+$. In order to do so, must establish some tools used to compare their stationary distributions. First, the following proposition provides a useful way to express entries of the Perron vector as a multiple of a single entry.

\begin{proposition} \label{writeFG}
Let $D$ be a directed graph whose vertex set is $\{v_1,\ldots,v_n\}$. We assume $v_1,\ldots,v_n$ is a shortest path from $v_1$ to $v_n$.  If $\phi$ is the Perron vector of the transition probability matrix $P$, then for $1\leq i \leq n$,  there exists a function $f_i$ such that
\begin{align*}
\phi(v_i) &= f_i\cdot \phi(v_n),
\end{align*}
where the the functions $f_i$ satisfy

\begin{equation} \label{recursion}
f_k =\frac{f_{k-1}}{d^+(v_{k-1})}+\sum_{\substack{i \geq k+1 \\ v_i \to v_k}} \frac{f_i}{d^+(v_i)}.
\end{equation}

\end{proposition}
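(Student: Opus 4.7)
The plan is to define $f_i := \phi(v_i)/\phi(v_n)$ for each $i$ (which is well defined since $\phi(v_n) > 0$ by Perron--Frobenius, as $D$ is strongly connected) and then to derive the stated recursion directly from $\phi P = \phi$. The existence of the factors $f_i$ with $\phi(v_i) = f_i \cdot \phi(v_n)$ is essentially tautological; the real content is the recursion, which is a constraint on the Perron equation coming from the fact that $v_1,\ldots,v_n$ is a \emph{shortest} path.

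First I would isolate the key structural observation: since $v_1,\ldots,v_n$ is a shortest directed path and all vertices lie on this path, no edge of the form $(v_j, v_k)$ with $k \geq j+2$ can exist in $D$. Indeed, such an edge would give a shortcut, producing a directed path $v_1,\ldots,v_j,v_k,\ldots,v_n$ from $v_1$ to $v_n$ of length strictly less than $n-1$, contradicting that $\dist(v_1,v_n) = n-1$. Consequently, every in-neighbor of $v_k$ is either $v_{k-1}$ or some $v_i$ with $i \geq k+1$ satisfying $v_i \to v_k$.

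Next I would write out the Perron equation at coordinate $v_k$:
\[
\phi(v_k) \;=\; \sum_{u \,:\, u \to v_k} \frac{\phi(u)}{d^+(u)}.
\]
Using the structural observation above, the right-hand sum decomposes cleanly as
\[
\phi(v_k) \;=\; \frac{\phi(v_{k-1})}{d^+(v_{k-1})} \;+\; \sum_{\substack{i \geq k+1 \\ v_i \to v_k}} \frac{\phi(v_i)}{d^+(v_i)}.
\]
Dividing both sides by $\phi(v_n)$ and using the definition $f_j = \phi(v_j)/\phi(v_n)$ yields exactly the desired recursion~(\ref{recursion}).

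There is essentially no obstacle here: the only step requiring any care is ensuring that the list of possible in-neighbors of $v_k$ is exhaustive, which is where the shortest-path hypothesis is crucial. Note that the recursion determines $f_k$ in terms of $f_{k-1}$ and higher-indexed $f_i$, so it is not an immediate ``closed-form'' computation of $f_k$, but merely a linear relation satisfied by the $f_i$; this is in line with how the proposition will be used later, namely as a tool for comparing Perron vectors of related graphs (such as $D$ and $D^+$) by propagating perturbations through the recursion.
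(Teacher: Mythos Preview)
Your proof is correct and essentially the same as the paper's: both derive the recursion from the Perron equation $\phi P = \phi$ at coordinate $v_k$, using the shortest-path hypothesis to restrict the in-neighbors of $v_k$ to $v_{k-1}$ and the vertices $v_i$ with $i \geq k+1$. The paper phrases this as a downward induction on $k$ (solving for $f_{k-1}$ in terms of $f_k,\ldots,f_n$), but your direct definition $f_i := \phi(v_i)/\phi(v_n)$ is equivalent and arguably cleaner.
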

\begin{proof}
We proceed by induction. Trivially, $f_n=1$. Let $1<k<n-1$. Assume the proposition holds for all integers $j$ where $k \leq j \leq n$. We show the result holds for $i=k-1$. As $\phi=\phi P$, we have
\[
\phi(v_k) =\frac{\phi(v_{k-1})}{d^+(v_{k-1})}+\sum_{\substack{i \geq k+1 \\ v_i \to v_k}} \frac{\phi(v_i)}{d^+(v_i)}.
\]
We note if $k=n$, then we do not have the second term of the equation above. Applying the induction hypothesis and rearranging the above yields
\begin{align}
f_{k-1} &=d^+(v_{k-1})\left(f_k-\sum_{\substack{i \geq k+1 \\ v_i \to v_k}} \frac{f_i}{d^+(v_i)} \right).
\end{align}
\end{proof}

The upshot of Proposition $\ref{writeFG}$ is that when comparing two graphs $D$ and $D'$ where $V(D)=V(D')=\{v_1,\ldots,v_n\}$ and $v_1,\ldots,v_n$ is a shortest directed path from $v_1$ to $v_n$ in $D$ and $D'$, we may write their Perron vectors entrywise as
\begin{align*}
\phi(v_i)&=f_i \cdot \phi(v_n)\\
\psi(v_i)&=g_i \cdot \psi(v_n),
\end{align*}
for some functions $f_i$ and $g_i$ satisfying (\ref{recursion}). The following proposition describes when $f_i=g_i$.

\begin{proposition} \label{equalpart}
Let $D$ and $D'$ and their respective Perron vectors be as described above.


If there is some $1 \leq s \leq n-1$ such that  $d_{D}^+(v_i)=d_{D'}^+(v_i)$ for each $s \leq i \leq n$, then we have $f_i=g_i$ for each $s \leq i \leq n$.
\end{proposition}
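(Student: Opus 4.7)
The plan is to proceed by backward induction on $i$, descending from $i = n$ to $i = s$. The base case is immediate: with the normalization $\phi(v_i) = f_i \cdot \phi(v_n)$ and $\psi(v_i) = g_i \cdot \psi(v_n)$ introduced in Proposition~\ref{writeFG}, one has $f_n = 1 = g_n$.

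For the inductive step, I would invoke the recursion (\ref{recursion}) of Proposition~\ref{writeFG} at index $k = i + 1$ and solve for the lower-index value, exactly as done in the inversion step at the end of that proof. This yields
\[
f_i \;=\; d^+(v_i)\!\left(f_{i+1} \;-\; \sum_{\substack{j \ge i+2 \\ v_j \to v_{i+1}}} \frac{f_j}{d^+(v_j)}\right),
\]
and the parallel expression for $g_i$ in $D'$. Assuming inductively that $f_j = g_j$ for every $i+1 \le j \le n$, I would match the two right-hand sides term by term: the leading factor $d^+(v_i)$ agrees in $D$ and $D'$ by hypothesis (since $i \ge s$), the value $f_{i+1}$ equals $g_{i+1}$ by induction, and each summand contributes $f_j/d^+(v_j) = g_j/d^+(v_j)$ because both the numerator (by induction) and the denominator (by hypothesis, since $j \ge s$) coincide. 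These identifications collapse the two formulas, giving $f_i = g_i$ and closing the step.

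The subtle point, and what I expect to be the main obstacle to making the argument fully rigorous, is verifying that the summation index set $\{j \ge i+2 : v_j \to v_{i+1}\}$ is the same in both graphs, since the hypothesis as stated constrains only out-degrees rather than the full adjacency structure. In the intended application, to the pair $(D, D^+)$ of Definition~\ref{DPlus}, where the two graphs differ in the single edge $(v_t, v_s)$ with $s < t$, and where the proposition is invoked with its parameter equal to $t+1$, this matching is automatic: the unique differing edge has head $v_s$, while the recursion at any index $i + 1 \ge t + 2 > s$ never references an in-edge into $v_s$. In a self-contained write-up I would either incorporate this structural compatibility into the hypothesis explicitly, or derive it from the conjunction of the out-degree agreement with the shortest-path constraint $N^+(v_i) \subseteq \{v_1,\dots,v_{i-1},v_{i+1}\}$; once the in-neighborhoods at each relevant step coincide, the induction proceeds without further issue.
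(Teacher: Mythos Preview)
Your backward induction via the rearranged form of \eqref{recursion} is exactly the argument the paper intends; the authors say only that the result ``can be proved inductively by using \eqref{recursion}'' and give no further details. Your observation about the summation index set is sharp and worth recording: out-degree agreement alone does not force the in-neighborhoods $\{j \ge i+2 : v_j \to v_{i+1}\}$ to coincide, so the proposition as literally stated is slightly under-hypothesized, though in both of the paper's applications (to $D^+$ with parameter $t+1$ and to $D^-$ with parameter $t$) the needed structural compatibility holds for the reasons you describe.
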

This proposition can be proved  inductively  by using \eqref{recursion} and we skip the proof here. The next proposition compares $f_i$ and $g_i$ for the graphs $D$ and $D^+$.


\begin{proposition} \label{addedge0}
For each  $D \in \D_n$, let $D^+$ be as defined in Definition $\ref{DPlus}$. Suppose $\phi$ and $\psi$ are the Perron vectors of the transition probability matrices of $D$ and $D^+$ respectively. Moreover, suppose $\phi(v_i)=f_i \cdot \phi(v_n)$ and  $\psi(v_i)=g_i \cdot \psi(v_n)$ for each $1 \leq i \leq n$. We have
\begin{enumerate}
\item[(a)] $f_i=g_i$ for each $t+1 \leq i \leq n$.
\item[(b)] $\tfrac{g_t}{f_t}=\frac{d^+_D(v_t)+1}{d^+_D(v_t)}$.
\item[(c)] $\tfrac{g_{t-1}-f_{t-1}}{t-1}=\tfrac{g_t}{d_D^+(v_t)+1}=\tfrac{g_{t-2}-f_{t-2}}{(t-1)_2}.$
\end{enumerate}
If $t \geq 5$, then additionally we have
\begin{enumerate}
\item[(d)] For each $3 \leq k \leq t-2$, we have $ \tfrac{g_{t-k}-f_{t-k}}{(t-1)_k}  \geq \tfrac{g_{t}}{d_D^+(v_t)+1} \left( 1-\tfrac{4}{3} \sum_{j=1}^{k-2} \tfrac{1}{(t-j)_2} \right) > 0$.
\item[(e)] For each $3 \leq k \leq t-2$, we have $\tfrac{g_{t-k}-f_{t-k}}{(t-1)_k} \leq \tfrac{g_{t}}{d_D^+(v_t)+1}$.
\end{enumerate}
\end{proposition}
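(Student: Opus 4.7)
The plan is to exploit the backward recursion \eqref{recursion} from Proposition~\ref{writeFG} for both $D$ and $D^+$ in parallel, tracking the differences $\Delta_k := g_k - f_k$ and writing $d := d_D^+(v_t)$ and $A := g_t/(d+1)$ throughout. Two structural observations are used repeatedly: the added edge $(v_t,v_s)$ changes only row $v_t$ of the transition matrix, so for every $k \neq s$ the in-neighbourhoods of $v_k$ agree in $D$ and $D^+$; and by the minimality of $t$, every $v_i$ with $i<t$ has $N^+(v_i)=\{v_1,\ldots,v_{i-1},v_{i+1}\}$ and hence out-degree exactly $i$. Combined with the shortest-path hypothesis (i) of Definition~\ref{def:Dn}, which forbids any in-neighbour of $v_m$ of index strictly below $m-1$, this pins down the in-neighbours of each $v_m$ with $m \le t-1$: the predecessor $v_{m-1}$, every $v_i$ with $m+1 \le i \le t-1$, and (variably) $v_t$ itself or vertices of index beyond $t$.

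Part (a) is immediate from Proposition~\ref{equalpart} applied with its parameter ``$s$'' taken to be $t+1$, since $d_D^+(v_i)=d_{D^+}^+(v_i)$ for all $i \ge t+1$. For part (b), I apply \eqref{recursion} at $v_{t+1}$ in both graphs; this is legitimate since condition (v) of Definition~\ref{def:Dn} forces $t \le n-1$. The in-neighbours of $v_{t+1}$ are identical in $D$ and $D^+$, so the equations for $f_{t+1}$ and $g_{t+1}$ differ only in the coefficients $1/d$ versus $1/(d+1)$ multiplying $f_t$ versus $g_t$. Using part (a), $f_{t+1}=g_{t+1}$ and the tails $\sum_{i>t+1}f_i/d^+(v_i)$, $\sum_{i>t+1}g_i/d^+(v_i)$ cancel, leaving $f_t/d = g_t/(d+1)$, which is (b).

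For part (c), subtracting the $D$ and $D^+$ versions of \eqref{recursion} at $v_t$ and using (a) to kill the tail yields $\Delta_t = \Delta_{t-1}/(t-1)$; since $\Delta_t = g_t - \tfrac{d}{d+1}g_t = A$ by (b), this is the first equality. For the second, I apply the same subtraction at $v_{t-1}$ (here $t-1 \ne s$, so the new edge contributes nothing to this equation). The tail runs over $i \ge t$; the $i > t$ terms die by (a), and any $i=t$ term dies because $g_t/(d+1) - f_t/d = 0$ by (b). Hence $\Delta_{t-1} = \Delta_{t-2}/(t-2)$, and combining with the first equality gives $\Delta_{t-2}/(t-1)_2 = A$.

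Parts (d) and (e) go by induction on $k$. For every $m$ with $s < m < t$, the same double cancellation (tail by (a), on-diagonal $v_t$ by (b)) collapses \eqref{recursion} at $v_m$ into the clean linear recurrence
\[
\Delta_m \;=\; \frac{\Delta_{m-1}}{m-1} + \sum_{i=m+1}^{t-1}\frac{\Delta_i}{i}.
\]
Normalizing by $a_k := \Delta_{t-k}/(t-1)_k$, the base cases from (c) read $a_1 = a_2 = A$, and the recurrence rewrites as $a_k = a_{k-1} - \sum_{\ell=1}^{k-2} a_\ell/(t-\ell)_{k-\ell}$. Writing $a_k = A(1 - \epsilon_k)$, one shows $\epsilon_k \ge 0$ (this is (e), since the non-negativity propagates through the recursion) and $\epsilon_k \le \tfrac{4}{3}\sum_{j=1}^{k-2}1/(t-j)_2$ (this is (d)). The main obstacle I anticipate is the inductive establishment of this upper bound for $\epsilon_k$: one must control cross-terms of the form $1/(t-\ell)_{k-\ell}$ using the previously-proved bound on $\epsilon_\ell$ and show that the constant $4/3$ absorbs the slack, ultimately reducing to elementary inequalities such as $(2t-3)/(t-2) \le 8/3$ for $t \ge 4$. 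Strict positivity in (d) then follows from the telescoping identity $\sum_{j\ge 1}1/((t-j)(t-j-1)) \le 1/3$, which gives $\epsilon_k \le 4/9 < 1$ uniformly.
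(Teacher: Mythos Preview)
Your overall strategy is the same as the paper's: subtract the two recursions \eqref{recursion}, normalise by $(t-1)_k$, and run a simultaneous induction for (d) and (e). Parts (a), (b) and the first equality of (c) are handled exactly as in the paper. However, there is a genuine gap in the treatment of (d) and (e), and a related unproved assumption in (c).

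The issue is the crossing of the vertex $v_s$. Your ``clean'' recurrence
\[
\Delta_m \;=\; \frac{\Delta_{m-1}}{m-1} + \sum_{i=m+1}^{t-1}\frac{\Delta_i}{i}
\]
is obtained by cancelling the $v_t$-contribution via (b); this cancellation works whenever $(v_t,v_m)$ has the \emph{same} status in $D$ and $D^+$, that is, for every $m\neq s$. At $m=s$ the new edge $(v_t,v_s)$ is present in $D^+$ but not in $D$, so the subtraction leaves an extra $+g_t/(d+1)=A$ on the right-hand side. After your normalisation this becomes an extra $-1/(t-1)_{t-s}$ in the step from $a_{t-s}$ to $a_{t-s+1}$. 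Your proposal explicitly restricts to $s<m<t$ and never treats this step; if $s\ge 3$ the induction as written stops at $k=t-s$ and does not reach $k=t-2$. The paper singles out precisely this step (``An additional argument is needed for $k=t-s$''), writes down the modified recursion at $v_s$, and then shows via a second geometric-type estimate that
\[
\sum_{j=1}^{t-s-1}\frac{1}{(t-j)_{t-s-j+1}}+\frac{1}{(t-1)_{t-s}}<\frac{4}{3}\cdot\frac{1}{(s+1)_2},
\]
so the constant $4/3$ still absorbs the defect. After $k=t-s+1$ the clean recurrence is again valid (your argument applies at $v_m$ for $m<s$ as well, since the edge $(v_t,v_m)$ has the same status in both graphs there), and the induction resumes.

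The same omission shows up in your proof of the second equality in (c): the parenthetical ``here $t-1\ne s$'' is asserted, not proved, and nothing in Definition~\ref{def:Dn} prevents $s=t-1$ (the degree bound $d^+(v_t)\ge\lfloor 2t/3\rfloor$ does not force $(v_t,v_{t-1})\in E(D)$). When $s=t-1$ the recursion at $v_{t-1}$ already carries the extra $A$, and the equality $\Delta_{t-2}/(t-1)_2=A$ becomes $\Delta_{t-2}/(t-1)_2=(t-2)A/(t-1)$; the paper's brief ``compute Part (c) directly'' and its separate handling of the cases $t=s+1$, $t=s+2$ are meant to cover this.
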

\begin{proof}
Since $4 \leq s \leq n-1$ is the smallest integer such that  an edge $(v_t,v_s)$ is missing for some $1 \leq s \leq t-1$, we have $d^+(v_i)=i$ for each $2 \leq i \leq t-1$. We also note $d_{D}^+(v_i)=d_{D^+}^+(v_i)$ for each $1 \leq i \not =t \leq n$ and $d_{D}^+(v_t)+1=d_{D^+}^+(v_t)$.

 Part (a) follows from Proposition \ref{equalpart} easily.  Part (b) can be verified  by using the equation \eqref{recursion}.
 If $t \in \{3,4\},$ then we do not need Part (d) or Part (e).  We can compute Part (c) directly by using the out-degree conditions and the equation \eqref{recursion}.  
 
 For Part (d) and Part Part (e), we first  prove them simultaneously by induction  on $k$ for $3 \leq k \leq t-s-1$.  We mention here for the case where $k=t-s$, we will give the argument separately. If either $t=s+1$ or $t=s+2$, then we prove directly for $k=t-s$ and for $t-s+1 \leq k \leq t-2$ the proof is by induction.

 The base case is $k=3$.  From \eqref{recursion}, we have
\[
g_{t-2}=\frac{g_{t-3}}{t-3}+\frac{g_{t-1}}{t-1}+\sum_{\underset{v_j \to v_{t-2}}{ j \geq t}} \frac{g_j}{d_{D^+}^+(v_j)}
\]
\[
f_{t-2}=\frac{f_{t-3}}{t-3}+\frac{f_{t-1}}{t-1}+\sum_{\underset{v_j \to v_{t-2}}{ j \geq t}} \frac{f_j}{d^+(v_j)}
\]
We note $\tfrac{f_j}{d_D^+(v_j)}=\tfrac{g_j}{d_{D^+}^+(v_j)} $ for all $j \geq t+1$.  Combining with Part (b), we have
\[
g_{t-2}-f_{t-2}=\frac{g_{t-3}-f_{t-3}}{t-3}+\frac{g_{t-1}-f_{t-1}}{t-1}.
\]
We solve for $\tfrac{g_{t-3}-f_{t-3}}{t-3}$ and divide  both sides of the resulted equation by $(t-1)_2$.  Then Part (c) gives the base case of Part (d) and Part (e).

For the inductive step, we assume Part (d) and Part (e) hold for all $3 \leq j \leq k-1$.  As  for the base case,  from equation \eqref{recursion},   $g_{t-k}$ satisfies the following equation:
\[
g_{t-k}=\frac{g_{t-k-1}}{t-k-1}+\sum_{1 \leq j \leq k-1} \frac{g_{t-j}}{t-j}+\sum_{\underset{v_j \to v_{t-k}}{ j \geq t}} \frac{g_j}{d_{D^+}^+(v_j)}.
\]
Similarly,
\[
f_{t-k}=\frac{f_{t-k-1}}{t-k-1}+\sum_{1 \leq j \leq k-1} \frac{f_{t-j}}{t-j}+\sum_{\underset{v_j \to v_{t-k}}{ j \geq t}} \frac{f_j}{d_{D}^+(v_j)}.
\]
Solving for $\tfrac{g_{t-k-1}-f_{t-k-1}}{t-k-1}$ and dividing both sides of the equation by $(t-1)_k$, we have
\[
\frac{g_{t-k-1}-f_{t-k-1}}{(t-1)_{k+1}}=\frac{g_{t-k}-f_{t-k}}{(t-1)_k}-\sum_{j=1}^{k-1} \frac{g_{t-j}-f_{t-j}}{(t-j)(t-1)_k}.
\]
We note $g_{t-j}-f_{t-j}>0$ for each $1 \leq j \leq k-1$ by the inductive hypothesis of Part (d). 
 Part (e) then follows  from the  inductive hypothesis of Part (e).
 
 Applying Part (c) as well as the inductive hypothesis for Part (e), we have

\[
\frac{g_{t-k-1}-f_{t-k-1}}{(t-1)_{k+1}} \geq  \frac{g_t}{d_D^+(v_t)+1} \left( 1-\frac{4}{3} \sum_{j=1}^{k-2} \frac{1}{(t-j)_2} - \sum_{j=1}^{k-1} \frac{1}{(t-j)_{k-j+1}}  \right),
\]
since
\begin{align*}
\sum_{j=1}^{k-1} \frac{1}{(t-j)_{k-j+1}}&=\frac{1}{(t-k+1)_2} \left(1+\frac{1}{t-k+2}+\frac{1}{(t-k+3)_2}+\cdots+\frac{1}{(t-1)_{k-2}} \right) \\
                                                                     &\leq  \frac{1}{(t-k+1)_2}  \sum_{j=0}^{\infty} \frac{1}{(t-k+2)^j} \\
                                                                     & < \frac{4}{3} \cdot   \frac{1}{(t-k+1)_2}.
\end{align*}
we get
\[
\frac{g_{t-k-1}-f_{t-k-1}}{(t-1)_{k+1}} \geq  \frac{g_t}{d_D^+(v_t)+1} \left( 1-\frac{4}{3} \sum_{j=1}^{k-1} \frac{1}{(t-j)_2}\right). 
\]
We are left to show the expression in part Part (d) is positive. We observe
\[
\sum_{j=1}^{k-1} \frac{1}{(t-j)_2} \leq \sum_{j=1}^{t-4} \frac{1}{(t-j)_2}=\frac{1}{(4)_2}+\cdots+\frac{1}{(t-1)_2}=\frac{1}{3}-\frac{1}{t-1} < \frac{1}{3}.
\]
here we used the assumption  $t \geq 5$.  We completed the inductive step for Part (d). 

An additional argument is needed for $k=t-s$ since $(v_t,v_s) \in E(D^+)$ and $(v_t,v_s) \not \in E(D)$.   We observe $s \geq 3$ since otherwise we do not need this argument.  We have
\[
g_{s}=\frac{g_{s-1}}{s-1}+\sum_{1 \leq j \leq s-t-1} \frac{g_{t-j}}{t-j}+\frac{g_t}{d_D^+(v_t)+1}+ \sum_{\underset{v_j \to v_{s}}{ j \geq t+1}} \frac{g_j}{d_D^+(v_j)},
\]
while
\[
f_{s}=\frac{f_{s-1}}{s-1}+\sum_{1 \leq j \leq s-t-1} \frac{f_{t-j}}{t-j}+\sum_{\underset{v_j \to v_{s}}{ j \geq t+1}} \frac{f_j}{d_D^+(v_j)}.
\]

 As we did previously in the the inductive proof, we have
 \[
\frac{g_{s-1}-f_{s-1}}{(t-1)_{t-s+1}} \geq  \frac{g_t}{d^+(w_t)+1} \left( 1-\frac{4}{3} \sum_{j=1}^{t-s-2} \frac{1}{(t-j)_2} - \sum_{j=1}^{t-s-1} \frac{1}{(t-j)_{t-s-j+1}}-\frac{1}{(t-1)_{t-s}}  \right).
\]
We need only to prove the first inequality of Part (d) for $k=t-s$. If $t-s=3$, then we prove Part (d) for $k=3$ directly.  For $t-s \geq 4$, we have
\begin{align*}
\sum_{j=1}^{t-s-1} \frac{1}{(t-j)_{t-s-j+1}}+\frac{1}{(t-1)_{t-s}} &< \frac{1}{(s+1)_2} \left( \sum_{j=0}^{\infty} \frac{1}{(s+2)^j}+\frac{1}{(t-1)_{t-s-2}} \right)\\
                                                                                                              & < \frac{1}{(s+1)_2} \left( \frac{5}{4}+ \frac{1}{(s+2)(s+3)}  \right)\\
                                                                                                              & <  \frac{4}{3} \cdot \frac{1}{(s+1)_{2}}
\end{align*}
We used facts $s \geq 3$ and $t-s \geq 4$ to prove the inequalities above.   For the range of $ t-s+1 \leq k \leq t-2$, this can be proved along the same lines as the range of $3 \leq k \leq t-s$.
  \end{proof}

  Using Proposition $\ref{addedge0}$, we can now compare $\gamma(D)$ and $\gamma(D^+)$.

\begin{proposition} \label{addedge1}
For each  $D \in \D_n$, let $D^+$ be defined as in Definition \ref{DPlus}. Then $\gamma(D^+)> \gamma(D)$.
\end{proposition}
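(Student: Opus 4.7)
The goal is to compare $\gamma(D)$ and $\gamma(D^+)$, and I would begin by invoking Proposition~\ref{writeFG} to write the two Perron vectors as $\phi(v_i)=f_i\,\phi(v_n)$ and $\psi(v_i)=g_i\,\psi(v_n)$, so that $f_n = g_n = 1$. Since $D\in\D_n$, property (iv) of Definition~\ref{def:Dn} places $v_2\in V_{\max}$ and $v_n\in V_{\min}$ in $D$, and hence
\[
\gamma(D) \;=\; \frac{\phi(v_2)}{\phi(v_n)} \;=\; f_2.
\]
For $D^+$ I would not attempt to identify $V_{\max}(D^+)$ or $V_{\min}(D^+)$. Instead, the pair $(v_2,v_n)$ serves as a witness, giving the trivial lower bound
\[
\gamma(D^+) \;=\; \frac{\max_i g_i}{\min_i g_i} \;\geq\; \frac{g_2}{g_n} \;=\; g_2.
\]
So the whole proposition reduces to the single strict inequality $g_2 > f_2$.

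This inequality is exactly what Proposition~\ref{addedge0} was engineered to deliver. If $t = 4$, then $k = t-2 = 2$ falls under part (c), which yields the exact identity $g_2 - f_2 = (t-1)_2\cdot g_t/(d_D^+(v_t)+1)$; this is strictly positive because $g_t > 0$. If $t \geq 5$, I would apply part (d) with $k = t-2$, obtaining
\[
\frac{g_2 - f_2}{(t-1)_{t-2}}\;\geq\;\frac{g_t}{d_D^+(v_t)+1}\left(1 - \tfrac{4}{3}\sum_{j=1}^{t-4}\tfrac{1}{(t-j)_2}\right),
\]
and the telescoping estimate $\sum_{j=1}^{t-4} 1/(t-j)_2 \leq 1/3 - 1/(t-1) < 1/3$, already established inside the proof of Proposition~\ref{addedge0}, makes the parenthesised factor at least $5/9$, so $g_2 - f_2 > 0$ in this case as well.

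Stringing these pieces together gives $\gamma(D^+) \geq g_2 > f_2 = \gamma(D)$, which is the claim. The main obstacle has already been absorbed into Proposition~\ref{addedge0}; the only conceptual point left is the observation that it is wasteful to try to identify the extremal vertex pair in $D^+$, and that lower-bounding $\gamma(D^+)$ through the witness pair $(v_2,v_n)$ is all one needs. With that observation in place the proof is essentially a two-line deduction from the coefficient-level comparison established earlier.
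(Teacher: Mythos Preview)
Your proposal is correct and follows essentially the same route as the paper: both arguments reduce the claim to the inequality $g_2>f_2$ and extract that from Proposition~\ref{addedge0} (part (c) for $t=4$, part (d) with $k=t-2$ for $t\geq 5$). Your direct ``witness pair'' formulation $\gamma(D^+)\geq g_2>f_2=\gamma(D)$ is in fact cleaner than the paper's rescaling-and-contradiction wrapper, which encodes the same content.
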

\begin{proof}   Since the Perron vector has positive entries,  rescalling it by a positive number will not change the principal ratio.  Thus  we are able to assume $\psi$ satisfies
\[
\phi(v_2)= \psi(v_2).
\]
To prove the claim, it is enough to show $\phi(v_n) > \psi(v_n)$.  Suppose  not, i.e.,  $\phi(v_n) \leq  \psi(v_n)$.

 Recall Proposition \ref{addedge0}. If $t=3$   then we have $g_2 > f_2$ as Part (a), Proposition \ref{addedge0}.  For $t=4$,   we have $g_2 > f_2$  as  Part (b), Proposition \ref{addedge0}.  Since we assumed $\phi(v_n) \leq \psi(v_n)$,  we have $\psi(v_2)=g_2 \cdot \psi(v_n)> \phi(v_2)=f_2 \cdot \phi(v_n)$, which is a contradiction.  If $t \geq 5$,  then we apply  Part (d) of Proposition \ref{addedge0} with $k=t-2$ and get $g_2 > f_2$.
In the case of  $t=5$, we still have the same inequality. Therefore, we can find the same contradiction as the case of $t=4$.
\end{proof}

\section{Deleting edges to increase the principal ratio} \label{sec:deleteEdge}

We now consider another family of graphs $\D_n'$, disjoint from $\D_n$, which satisfy the properties necessary for extremality in Section $\ref{extremalStructure}$.


\begin{definition}\label{def:DnPrime}
For each  $n$,  let $\D'_n$ be a family of directed graphs where each $D \in \D_n'$ on vertex set $\{v_1,\dots,v_n\}$ satisfies the following properties:
\begin{description}
\item [(i)] The shortest path from $v_1$ to $v_n$ is of length $n-1$ and is denoted by $v_1,v_2,\ldots,v_n$.
\item[(ii)] For each $2 \leq i \leq n-1$, $d^+(v_i)=i$.
\item[(iii)] $v_2 \in V_{\max}$, $v_n \in V_{\min}$, and $\dist( V_{\max}, V_{\min})=\dist(v_2,v_n)=n-2$.
\item[(iv)] $d^+(v_n) \geq 2$.
\item[(v)] $N^+(v_n) \not = \{v_1,v_2\}$.
\end{description}
\end{definition}

For each $D\in \D_n'$, we now define an associated graph $D^-$ identical to $D$ except for the deletion of a single edge.

\begin{definition} \label{DMinus}
For each $D \in D_n'$, let $3 \leq t \leq n-1$ be the largest integer such that $(v_n,v_t) \in E(D)$. We define $D^-$ as the directed graph whose edge set is $E(D) \setminus \{(v_n,v_t)\}$, as illustrated in Figure $\ref{fig:DMinus}$.
\end{definition}

\begin{figure}[t]
\centering
\includegraphics[scale=.5]{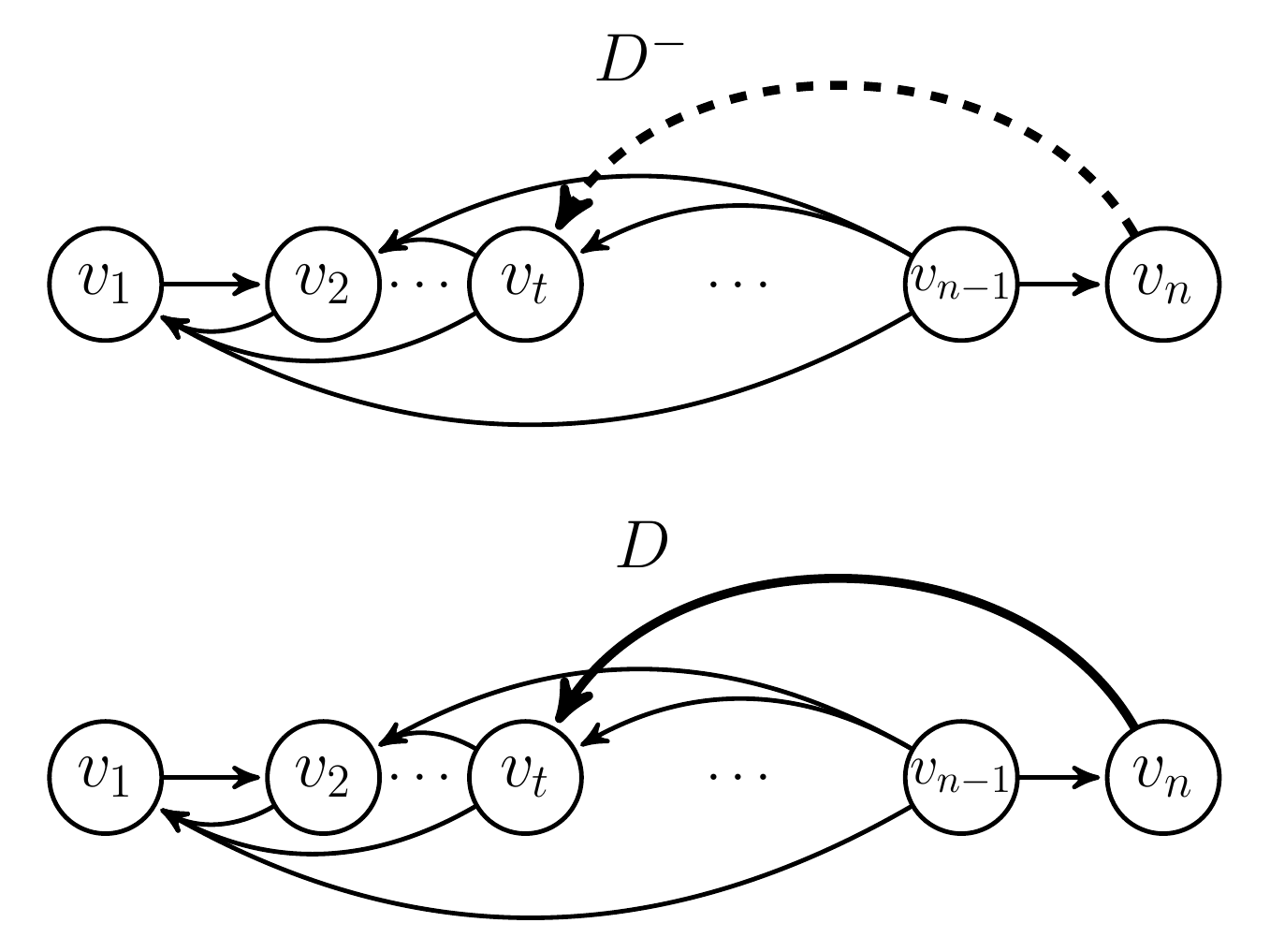}
\caption{$D$ and $D^-$. A dashed edge indicates the absence of that edge.}\label{fig:DMinus}
\end{figure}

Analogous to how Proposition \ref{addedge0} allowed us to compare the principal ratios of $D$ and $D^+$, the following proposition will allow us to compare the principal ratios of $D$ and $D^-$.

\begin{proposition} \label{deleteedge0}
For each $D \in \D_n'$, let $D^-$ be defined as in Definition \ref{DMinus}. Assume $\phi$ and $\psi$ are the Perron vectors of the  probability transition matrices of $D$ and $D^-$ respectively.  Moreover, suppose $\phi(v_i)=f_i \cdot \phi(v_n)$ and  $\psi(v_i)=g_i \cdot \psi(v_n)$ for each $1 \leq i \leq n$. We have
\begin{enumerate}
\item[(a)]    $f_i=g_i$ for $t \leq i \leq n$ .
\item[(b)] $\tfrac{g_{t-1}-f_{t-1}}{t-1}=\tfrac{1}{d_D^+(v_n)}$.
\item[(c)] $0< \tfrac{1}{d_D^+(v_n)} \left( 1-\tfrac{1}{(t-1)(d_D^+(v_n)-1)} \right) \leq \tfrac{g_{t-2}-f_{t-2}}{(t-1)_2} \leq \tfrac{1}{d^+_D(v_n)}$.
\end{enumerate}
If $t \geq 5$, then additionally we have
\begin{enumerate}
\item[(d)]  For $3 \leq k \leq t-2$, we have \\
$\tfrac{g_{t-k}-f_{t-k}}{(t-1)_k} \geq \tfrac{1}{d_D^+(v_n)} \left(1-\tfrac{4}{3} \sum_{j=1}^{k-2} \tfrac{1}{(t-j)_2}  -\tfrac{1}{d_D^+(v_n)-1} \sum_{j=1}^{k-1} \tfrac{1}{(t-1)_j} \right)>0 .$
\item[(e)]  For $3 \leq k \leq t-2$, we have     $\tfrac{g_{t-k}-f_{t-k}}{(t-1)_k} \leq \tfrac{1}{d_D^+(v_n)}.$
\end{enumerate}
\end{proposition}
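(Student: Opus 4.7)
The plan is to mirror the inductive argument of Proposition~\ref{addedge0}, now tracking the deletion of the edge $(v_n,v_t)$ rather than an addition. Since $D$ and $D^-$ agree on every out-degree except at $v_n$ (where $d_{D^-}^+(v_n) = d_D^+(v_n)-1$) and on every edge except the absent $(v_n,v_t)$ in $D^-$, I apply Proposition~\ref{writeFG} to compute $f_i,g_i$ from $f_n=g_n=1$ downward via the recursion \eqref{recursion}. Part~(a) is then immediate: the recursion expressing $f_{k-1}$ in terms of $f_k,\ldots,f_n$ invokes $d^+(v_n)$ only when $v_n$ is an in-neighbor of $v_k$, which by the maximality of $t$ (among indices at most $n-1$ with $v_n\to v_t$) occurs only for $k\leq t$. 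Hence for $k\geq t+1$ the recursions for $D$ and $D^-$ are identical, giving $f_i=g_i$ for all $i\geq t$ by downward induction.

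Parts~(b) and~(c) are direct applications of \eqref{recursion}. For~(b), apply the recursion at $k=t$: in $D$ the in-neighbor $v_n$ of $v_t$ contributes $f_n/d_D^+(v_n)=1/d_D^+(v_n)$, whereas this term is absent in $D^-$; multiplication by $d^+(v_{t-1})=t-1$ yields~(b). For~(c), apply the recursion at $k=t-1$ and use Part~(a) to cancel all $g_i-f_i$ for $i\geq t$. The only remaining differences are the $\tfrac{g_{t-1}-f_{t-1}}{t-1}$ term from~(b) and, if $v_n\to v_{t-1}$ (present in both graphs, since only $(v_n,v_t)$ is deleted), a correction $\tfrac{1}{d_D^+(v_n)-1}-\tfrac{1}{d_D^+(v_n)}=\tfrac{1}{d_D^+(v_n)(d_D^+(v_n)-1)}$ from the changed denominator. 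Dividing by $(t-1)_2$ and bounding this indicator in $\{0,1\}$ yields both the upper and lower bound, and positivity is immediate from $t\geq 3$ and $d_D^+(v_n)\geq 2$.

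For Parts~(d) and~(e), I induct on $k$ from $3$ to $t-2$, exactly in parallel with the inductive step of Proposition~\ref{addedge0}. Applying \eqref{recursion} at index $j=t-k$, using Part~(a) to trim the sum to indices $t-k+2\leq i\leq t-1$, and dividing by $(t-1)_k$ gives
\[
\frac{g_{t-k}-f_{t-k}}{(t-1)_k} \;=\; \frac{g_{t-k+1}-f_{t-k+1}}{(t-1)_{k-1}} \;-\; \sum_{m=1}^{k-2}\frac{g_{t-m}-f_{t-m}}{(t-m)(t-1)_{k-1}} \;-\; \frac{\varepsilon_{k-1}}{(t-1)_{k-1}\,d_D^+(v_n)(d_D^+(v_n)-1)},
\]
where $\varepsilon_{k-1}\in\{0,1\}$ indicates whether $v_n\to v_{t-k+1}$. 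Part~(e) follows directly: the inductive lower bound of~(d) ensures the subtracted terms are non-negative, so the left side is at most $\tfrac{g_{t-k+1}-f_{t-k+1}}{(t-1)_{k-1}}\leq \tfrac{1}{d_D^+(v_n)}$. For~(d), use the upper bounds from~(b), (c), and the inductive hypothesis of~(e) to obtain $\tfrac{g_{t-m}-f_{t-m}}{(t-1)_m}\leq\tfrac{1}{d_D^+(v_n)}$, then convert the sum into the claimed $\tfrac{4}{3}\sum\tfrac{1}{(t-j)_2}$ error via the same geometric-series estimate used in Proposition~\ref{addedge0}, and absorb the accumulated $\varepsilon$-corrections into the new term $\tfrac{1}{d_D^+(v_n)-1}\sum\tfrac{1}{(t-1)_j}$.

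The main obstacle is verifying that the lower bound in~(d) stays strictly positive throughout the induction. This uses $t\geq 5$ to give $\sum_{j\geq 1}\tfrac{1}{(t-j)_2}\leq\tfrac{1}{3}$ by telescoping, so the first error contributes at most $\tfrac{4}{9}$. For the second, a geometric bound $\sum_{j\geq 1}\tfrac{1}{(t-1)_j}<\tfrac{t-2}{(t-1)(t-3)}$, combined with $d_D^+(v_n)-1\geq 1$, gives an error at most $\tfrac{3}{8}$ at $t=5$ and smaller for larger $t$. Hence the two subtracted corrections together remain strictly less than $1$, which preserves positivity and completes the inductive step.
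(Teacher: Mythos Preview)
Your proposal is correct and follows essentially the same route as the paper: both establish (a)--(c) directly from Proposition~\ref{equalpart} and the recursion~\eqref{recursion}, then prove (d) and (e) by simultaneous downward induction using the key identity (your displayed formula, the paper's equation~\eqref{delete6}) and the geometric-series estimate from Proposition~\ref{addedge0}. The only differences are cosmetic---you use an indicator $\varepsilon_{k-1}$ where the paper splits into the two cases $(v_n,v_{t-k})\in E(D)$ or not, and your induction is indexed $k-1\to k$ rather than $k\to k+1$ (so ``at index $j=t-k$'' should read ``at index $t-k+1$'' to match the formula you actually derive).
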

\begin{proof}
We observe $d_D^+(v_i)=d_{D^-}^+(v_i)=i$ for each $1 \leq i \leq n-1$ and $d_D^+(v_n)-1=d_{D^-}^+(v_n)$.  Also, $f_n=g_n=1$.  Part (a) is a simple consequence of  Proposition \ref{equalpart}.   We can verify Part (b)  and Part (c) directly.  We note when we check Part (c),  there are two cases depending on whether $(v_n,v_{t-1})$ is an edge or not.  
 If $t \in \{3,4\}$, then we do not need Part (d) or Part (e).  Thus we assume $t \geq 5$.   We will  prove Part (d) and Part (e) simultaneously using induction.

  The base case is $k=3$. We have two cases. \\
  
  \noindent \underline{Case 1}: $(v_n,v_{t-3}) \in E(D)$. \\
  
  
Using the equation $(\ref{recursion})$, we have
\begin{equation} \label{delete1}
g_{t-2}=\frac{g_{t-3}}{t-3}+\frac{g_{t-1}}{t-1}+\frac{1}{d_D^+(v_n)-1}+\sum_{\underset{v_j \to v_{t-3}}{t \leq j \leq n-1}} \frac{g_j}{d_D^+(v_n)}
\end{equation}
Similarly,
\begin{equation} \label{delete2}
f_{t-2}=\frac{f_{t-3}}{t-3}+\frac{f_{t-1}}{t-1}+\frac{1}{d_D^+(v_n)}+\sum_{\underset{v_j \to v_{t-3}}{t \leq j \leq n-1}} \frac{f_j}{d_D^+(v_n)}
\end{equation}
Subtracting $f_{t-2}$ from $g_{t-2}$, rearranging terms followed by dividing both sides by $(t-1)_2$, we have
\begin{equation} \label{delete3}
\frac{g_{t-3}-f_{t-3}}{(t-1)_3}=\frac{g_{t-2}-f_{t-2}}{(t-1)_2}-\frac{g_{t-1}-f_{t-1}}{(t-1)(t-1)_2}-\frac{1}{d_D^+(v_n)(d_D^+(v_n)-1)(t-1)_2}
\end{equation}
Applying Part (a) to Part (c), we have
\[
\frac{g_{t-3}-f_{t-3}}{(t-1)_3} \geq \frac{1}{d_D^+(v_n)} \left(1-\frac{1}{(t-1)_2}-\frac{1}{d_D^+(v_n)-1} \left( \frac{1}{t-1}+\frac{1}{(t-1)_2} \right)   \right) >0
\]
The above quantity is clearly positive since $t \geq 5$.  Therefore, we obtained the base case for Part (d).  From  \eqref{delete3}, if we apply Part (b) and Part (c) as well as  $t \geq 5$, then we get $\tfrac{g_{t-3}-f_{t-3}}{(t-1)_3} \leq \tfrac{1}{d^+(v_n)}$, which is the base case for Part (e). \\

  \noindent \underline{Case 2}: $(v_n,v_{t-3}) \not\in E(D)$. \\

 If $(v_n,v_{t-3})$ is not an edge, then $\tfrac{1}{d_D^+(v_n)-1}$ is missing from \eqref{delete1} and $\tfrac{1}{d_D^+(v_n)}$ is missing in \eqref{delete2}. However, \eqref{delete3} still holds in this case.  We can prove the base case for Part (d) and Part (e) similarly.

For the inductive step, we assume Part (d) and Part (e) are true for all $3 \leq i \leq k$. We first deal with the case where $(v_n,v_{t-k})$ is an edge.  Again, from  equation $(\ref{recursion})$ we have
\begin{equation} \label{delete4}
g_{t-k}=\frac{g_{t-k-1}}{t-k-1}+\sum_{1 \leq j  \leq k-1} \frac{g_{t-j}}{t-j}+\frac{1}{d_D^+(v_n)-1}+\sum_{t \leq j \leq n-1} \frac{g_j}{d_D^+(v_j)}.
\end{equation}
Similarly, for $f_{t-k}$, we have
\begin{equation} \label{delete5}
f_{t-k}=\frac{f_{t-k-1}}{t-k-1}+\sum_{1 \leq j  \leq k-1} \frac{f_{t-j}}{t-j}+\frac{1}{d_D^+(v_n)-1}+\sum_{t \leq j \leq n-1} \frac{f_j}{d_D^+(v_j)}.
\end{equation}
We  solve for $\tfrac{g_{t-k-1}-f_{t-k-1}}{t-k-1}$ and then divide both sides of the  equation by $(t-1)_k$. We get
\begin{equation} \label{delete6}
\frac{g_{t-k-1}-f_{t-k-1}}{(t-1)_{k+1}}=\frac{g_{t-k}-f_{t-k}}{(t-1)_{k}}-\sum_{j=1}^{k-1}\frac{g_{t-j}-f_{t-j}}{(t-j)(t-1)_{k}}-\frac{1}{d_D^+(v_n)(d_D^+(v_n)-1)(t-1)_k}.
\end{equation}
 By the  inductive hypothesis for Part (d) and Part (e), we get $\tfrac{g_{t-k-1}-f_{t-k-1}}{(t-1)_{k+1}} \leq \tfrac{g_{t-k}-f_{t-k}}{(t-1)_{k}} \leq \tfrac{1}{d^+(v_n)}$, which  proves the inductive step for Part (e).
  
From  the inductive hypothesis of Part (d), we get  
  \begin{equation} \label{delete7}
  \frac{g_{t-k}-f_{t-k}}{(t-1)_k} \geq \frac{1}{d_D^+(v_n)} \left(1-\frac{4}{3} \sum_{j=1}^{k-2} \frac{1}{(t-j)_2}  -\frac{1}{d_D^+(v_n)-1} \sum_{j=1}^{k-1} \frac{1}{(t-1)_j} \right)
  \end{equation}
  From the inductive hypothesis for Part (e), we have 
  \begin{equation} \label{delete8}
  \sum_{j=1}^{k-1}\frac{g_{t-j}-f_{t-j}}{(t-j)(t-1)_{k}} =     \sum_{j=1}^{k-1} \frac{g_{t-j}-f_{t-j}}{(t-1)_j} \cdot \frac{1}{(t-j)_{k-j+1}}  \leq \frac{1}{d^+(v_n)}  \sum_{j=1}^{k-1} \frac{1}{(t-j)_{k-j+1}}
  \end{equation}
  Putting \eqref{delete6}, \eqref{delete7} and \eqref{delete8} together, we get 
\[
\frac{g_{t-k-1}-f_{t-k-1}}{(t-1)_{k+1}} \geq \frac{1}{d_D^+(v_n)} \left(1-\frac{4}{3} \sum_{j=1}^{k-2} \frac{1}{(t-j)_2}-\sum_{j=1}^{k-1} \frac{1}{(t-j)_{k-j+1}}-\frac{1}{d_D^+(v_n)-1}\sum_{j=1}^k \frac{1}{(t-1)_j}  \right).
\]
By the same lines as the proof of Proposition \ref{addedge0}, we can show $\sum_{j=1}^{k-1} \tfrac{1}{(t-j)_{k-j+1}} < \tfrac{4}{3} \cdot \tfrac{1}{(t-k+1)_2}$. Therefore, we proved 
\[
\frac{g_{t-k}-f_{t-k}}{(t-1)_k} \geq \frac{1}{d_D^+(v_n)} \left(1-\frac{4}{3} \sum_{j=1}^{k-2} \frac{1}{(t-j)_2}  -\frac{1}{d_D^+(v_n)-1} \sum_{j=1}^{k-1} \frac{1}{(t-1)_j} \right)
\]
 We note 
 \[
\frac{4}{3} \sum_{j=1}^{k-2} \frac{1}{(t-j)_2}+\frac{1}{d^+(v_n)-1} \sum_{j=1}^{k-1} \frac{1}{(t-1)_j}  \leq \frac{4}{3} \left( \frac{1}{3}-\frac{1}{t-1} \right)+ \frac{1}{t-1} \sum_{i=0}^{\infty} \frac{1}{2^i} < \frac{4}{9}+\frac{3}{8}<1
\]
Here we applied the assumption $t \geq 5$. Thus, that the expression in Part (d) is positive follows from the inequality above. We established  the inductive step of Part (d) in the case where $(v_n,v_k)$ is an edge.  For the case where  $(v_n,v_{t-k})$ is not an edge, we note $\frac{1}{d^+(v_n)-1}$ is missing from \eqref{delete4} and $\frac{1}{d^+(v_n)}$ is missing from \eqref{delete5}. The argument goes along the same lines.
\end{proof}

Using Proposition $\ref{deleteedge0}$, we can now compare $\gamma(D)$ and $\gamma(D^-)$.

\begin{proposition} \label{deleteedge1}
For each $D \in \D_n'$, let $D^-$ be defined as Definition \ref{DMinus}. We have $\gamma(D^-)> \gamma(D)$.

\end{proposition}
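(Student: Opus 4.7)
The strategy mirrors Proposition~\ref{addedge1}. I normalize the Perron vector $\psi$ of $D^-$ so that $\psi(v_2) = \phi(v_2)$; this rescaling leaves $\gamma(D^-)$ unchanged. Since $v_2 \in V_{\max}$ and $v_n \in V_{\min}$ for $D$, we have $\gamma(D) = \phi(v_2)/\phi(v_n)$, while by definition $\gamma(D^-) \geq \psi(v_2)/\psi(v_n)$. Therefore, to conclude $\gamma(D^-) > \gamma(D)$, it is enough to establish $\psi(v_n) < \phi(v_n)$.

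Suppose for contradiction that $\psi(v_n) \geq \phi(v_n)$. Using Proposition~\ref{writeFG}, write $\phi(v_i) = f_i \phi(v_n)$ and $\psi(v_i) = g_i \psi(v_n)$ with $f_n = g_n = 1$. If we can show $g_2 > f_2$, then
$$\psi(v_2) = g_2 \psi(v_n) > f_2 \phi(v_n) = \phi(v_2),$$
contradicting the normalization. Hence the proof reduces to establishing $g_2 > f_2$.

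The inequality $g_2 > f_2$ follows by case analysis on the parameter $t$ of Definition~\ref{DMinus}, using Proposition~\ref{deleteedge0}. If $t = 3$, part (a) of that proposition gives $f_i = g_i$ for $i \geq 3$, while part (b) gives $g_2 - f_2 = 2/d_D^+(v_n) > 0$ since $d_D^+(v_n) \geq 2$. If $t = 4$, then part (c) yields $g_2 - f_2 > 0$ directly, the lower bound there being strictly positive because $d_D^+(v_n) \geq 2$. If $t \geq 5$, we apply part (d) with $k = t - 2$, which asserts $g_2 - f_2 > 0$. In every case we reach a contradiction, completing the proof.

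The main obstacle, already absorbed into Proposition~\ref{deleteedge0}, is the inductive control over the error terms generated by back-substitution in the recursion~\eqref{recursion}, which is what guarantees that the positivity of $g_{t-k} - f_{t-k}$ persists all the way down to $k = t-2$. Once those estimates are in hand, the present proposition reduces to the clean case split on $t$ described above, and no additional machinery is required.
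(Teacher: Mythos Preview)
Your proof is correct and follows essentially the same argument as the paper: rescale $\psi$ so that $\psi(v_2)=\phi(v_2)$, assume $\psi(v_n)\geq\phi(v_n)$ for contradiction, and then use the appropriate part of Proposition~\ref{deleteedge0} (part (b) for $t=3$, part (c) for $t=4$, part (d) with $k=t-2$ for $t\geq 5$) to obtain $g_2>f_2$, which contradicts the normalization. Your observation that one only needs $\gamma(D^-)\geq \psi(v_2)/\psi(v_n)$ rather than equality is a nice point the paper leaves implicit.
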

\begin{proof} We use the same idea as the proof for Proposition \ref{addedge1}.  We rescale $\psi$ such that $\phi(v_2)=\psi(v_2)$ and show $\psi(v_n)< \phi(v_n)$.  Suppose  $\psi(v_n) \geq  \phi(v_n)$.
 We will show $g_2 > f_2$ which will yield $\psi(v_2)> \phi(v_2)$ since $\psi(v_2)=g_2 \cdot \psi(v_n)$ and  $\phi(v_2)=f_2 \cdot \phi(v_n)$ as well as the assumption $\psi(v_n) \geq \phi(v_n)$. If $t \in \{3,4\}$, then $g_2 > f_2$ follows either from Part (b) or Part (c) of Proposition \ref{deleteedge0}.  If $t \geq 5$, then we will apply Part (d) of Proposition \ref{deleteedge0} with $k=t-2$  to get  $g_2 > f_2$.  We  draw the contradiction similarly.
\end{proof}

\section{Proof of Theorem $\ref{thm1}$} \label{sec:mainThmProof}

We can now prove Theorem $\ref{thm1}$ as a consequence of Propositions $\ref{keylemma1}-\ref{deleteedge1}$. \\

\noindent {\bf Proof of Theorem \ref{thm1}:} We will show that the extremal graphs achieving the maximum of the principal ratio over all strongly connected $n$-vertex graphs are precisely $D_1,D_2$, and $D_3$ and that their principal ratio is indeed as claimed in Theorem \ref{thm1}.

We will use the fact that $D_1$ has principal ratio as follows, which we will prove at the end of this section:
\begin{align*}
\gamma(D_1)&=\frac{2}{3}\left(\frac{n}{n-1}+\frac{1}{(n-1)!}\sum_{i=1}^{n-3}i!\right)(n-1)!
\end{align*}

Assume $D$ is extremal, i.e. its principal ratio is at least as large as that of any directed graph on $n$ vertices. For any (strongly connected) directed graph $D$, we have $\dist(V_{\max},V_{\min}) \leq n-2$ by Proposition \ref{reducedist}. If $D$ is such that $\dist(V_{\max},V_{\min}) \leq n-3$, then $D$ is not extremal since by Proposition \ref{smalldist}, we have  $\gamma(D)< \gamma(D_1)$. So $\dist(V_{\max},V_{\min})=n-2$, where $v_2 \in V_{\max}$, $v_n \in V_{\min}$, and $v_2,v_3,\ldots,v_n$ is a shortest path from $v_2$ to $v_n$.  If $D$ is extremal, then $\gamma(D)\geq \gamma(D_1)>\tfrac{2}{3}(n-1)!$. So, applying Proposition \ref{special} and Proposition \ref{degree}, we can assume further that $v_1,v_2,\ldots,v_n$ is a shortest path from $v_1$ to $v_n$, $d^+(v_i)=i $ for $i \in \{2,3\}$, and $d^+(v_i) \geq \lfloor  \tfrac{2i}{3} \rfloor$ for $4 \leq i \leq n$.

Now, if $D \in \D_n$, then $D$ is not extremal by Proposition 10. Similarly, if $D \in \D_n'$, $D$ is not extremal by Proposition 12.

So, $D\not \in \D_n$ and $D \not\in \D_n'$. Since $D \not  \in \D_n$ but satisfies all properties for inclusion in $\D_n$ except $(v)$ in Definition \ref{def:Dn}, it must be that $d^+(v_i)=i$ for each $2\leq i \leq n-1$. Then, since $D\not \in \D_n$ but satisfies all properties for inclusion in $\D_n$ except either (iv) or (v) in Definition \ref{def:DnPrime}, either $d^+(v_n)=1$ or $N^+(v_n)=\{v_1,v_2\}$. In the former case, if $N^+(v_n)=\{v_j \}$ for $j\geq 3$, then arguing along the same lines as in the proof of Proposition \ref{deleteedge1}, one has $\gamma(D)<\gamma(D_1)$; otherwise $D=D_1$ or $D=D_2$. In the latter case, $D=D_3$.

Lastly, we show that $D_1,D_2$, and $D_3$ all have the same principal ratio. Assume $\phi, \psi, \tau$ are the Perron vectors of $D_1, D_2$, and $D_3$ respectively. Scale their Perron vectors  so that all three agree on the $n$th coordinate.  By Proposition \ref{writeFG},  we know there exist (positive) functions $f_i, g_i ,h_i$ so that
\begin{align*}
\phi(v_i) &= f_i \cdot \phi(v_n) \\
\psi(v_i) &= g_i \cdot \psi(v_n) \\
\tau(v_i) &= h_i \cdot \tau(v_n)
\end{align*}
By Proposition \ref{equalpart},  we note $f_i=g_i=h_i$ for $2 \leq i \leq n$.   We can prove the following inequalities for $f_i$. 
\begin{enumerate}
\item[(a)] $\tfrac{f_{n-1}}{n-1}=\tfrac{f_{n-2}}{(n-1)_2}=f_n$
\item[(b)] For each $3 \leq k \leq t-2$, we have $f_n \left( 1-\tfrac{4}{3} \sum_{j=1}^{k-2} \tfrac{1}{(n-j)_2} \right) \leq \tfrac{f_{n-k}}{(n-1)_k} \leq  f_n.$
\end{enumerate}
The proof of Part (a) and Part (b) uses the same argument as the proof of Proposition \ref{addedge0} and it is omitted here.  If $n \leq 5$, then we can verify  $\max \{f_i: 1 \leq i \leq n\}=f_2$ and  $\min \{f_i: 1 \leq i \leq n\}=f_n$ directly.  
Suppose $n \geq 6$.   By  Part (b), for each $3 \leq k \leq n-2$ we have
\begin{align*}
\frac{f_{n-k}-f_{n-k+1}}{(n-1)_k}&=\frac{f_{n-k}}{(n-1)_k}-\frac{f_{n-k+1}}{(n-1)_{k-1}(n-k)}\\
                                                          & \geq \frac{f_{n-k}}{(n-1)_k}-\frac{f_n}{n-k} \\
                                                          & \geq   f_n\left(1-\frac{4}{3} \sum_{j=1}^{k-2} \frac{1}{(n-j)_2} -\frac{1}{n-k}\right)\\
                                                          &\geq f_n\left(1-\frac{4}{3(n-k+2)}-\frac{1}{n-k}\right)\\
                                                          & > f_n\left( 1-\frac{1}{3}-\frac{1}{2} \right)=\frac{f_n}{6} 
\end{align*}
 We can check $f_1 > f_n$ easily. Therefore, we obtain  $\max \{f_i: 1 \leq i \leq n\}=f_2$ and  $\min \{f_i: 1 \leq i \leq n\}=f_n$ .  The same holds for $g_2$ and $h_2$, which completes the proof.\hfill $\square$ \\

We now compute the stationary distribution and principal ratio of $D_1$, completing the proof of Theorem \ref{thm1}. \\





\noindent {\bf Claim A}.
{\em Let $D_1$ be as defined in the statement of Theorem \ref{thm1}, and let $\phi$ be the Perron vector associated with the transition probability matrix $P$ of $D_1$. Then
\[
\gamma(D_1)=\frac{2}{3}\left(\frac{n}{n-1}+\frac{1}{(n-1)!}\sum_{i=1}^{n-3}i!\right)(n-1)!.
 \]
where
\[
\min_{1 \leq i \leq n} \phi(v_i)= \phi({v_n}) \textrm{ and }  \max_{1 \leq i \leq n} \phi(v_i)=\phi({v_2}).
\]
}

%
%
%
%
%
%
%
%

\noindent
{\bf Proof}: Since we are concerned with the ratio of the maximum entry and the minimum entry of the Perron vector, rescaling the Perron vector  by a positive number will not affect our result. We assume $x=(x_1,x_2,\dots, x_n)$ with $x_n=1$ such that $x P=x$, where
\[
P=
\begin{pmatrix}
0 & 1 & 0 & \cdots  & 0 & 0 \\
1/2 & 0 & 1/2 & \cdots & 0 & 0 \\
1/3 & 1/3 & 0 & \cdots & 0 & 0 \\
1/4 & 1/4 & 1/4 & \cdots & 0 & 0 \\
\vdots & \vdots & \vdots & \ddots & \vdots & \vdots \\
\tfrac{1}{n-1} & \tfrac{1}{n-1} & \tfrac{1}{n-1} & \cdots & 0 & \tfrac{1}{n-1}\\
1 & 0 & 0 & \cdots & 0 & 0 \\
\end{pmatrix}.
 \]
Suppose $P=(p_1,p_2,\ldots,p_n)$ where $p_i$ is the $i$-th column of $P$ for each $1 \leq i \leq n$. From $x_1=x \cdot p_1$ and $x_2=x \cdot p_2$, we have
\begin{equation} \label{ex:1}
x_2=\frac{4}{3}x_1-\frac{2}{3},
\end{equation}
where we used the assumption $x_n=1$. As  $x_3=x \cdot p_3$ and $x_1= x \cdot p_1$, we have
\begin{equation} \label{ex:2}
x_3=\frac{3}{4}x_1-\frac{3}{4}.
\end{equation}
For each $2 \leq k \leq n-1$, we define
\begin{align*}
a_k &= \frac{2k}{(k+1)(k-1)!} \\
b_k &= \frac{k}{(k+1)(k-1)!}\sum_{i=0}^{k-2}i!
\end{align*}
For each $2 \leq k \leq n-1$, we will show
\begin{equation} \label{ex:3}
x_k =a_kx_1-b_k.
\end{equation}
We will prove \eqref{ex:3} by induction on $k$. The cases $k=2$ and $k=3$ are given by \eqref{ex:1} and \eqref{ex:2} respectively.  Assume \eqref{ex:3} is true up to $l$ for some $3 \leq  l \leq n-2$. Using $x_{l+1}=x \cdot p_{l+1} $ and $x_{l-1}=x \cdot p_{l-1}$,  we have
\begin{align*}
x_{l+1} &=\frac{l+1}{l+2}\left(x_{l-1}-\frac{x_{l-2}}{l-2}\right) \\
&= \frac{l+1}{l+2}\left(\left(a_{l-1}x_1-b_{l-1}\right)-\frac{a_{l-2}x_1-b_{l-2}}{l-2}\right) \\
&=\frac{l+1}{l+2}\left(a_{l-1}-\frac{a_{l-2}}{l-2}\right)x_1 - \frac{l+1}{l+2}\left(b_{l-1}-\frac{b_{l-2}}{l-2}\right) \\
&=a_{l+1}x_1-b_{l+1}
\end{align*}
The inductive hypothesis  and an elementary computation gives:

\begin{align*}
a_{l+1} &= \frac{l+1}{l+2}\left(a_{l-1}-\frac{a_{l-2}}{l-2}\right) \\
&= \frac{l+1}{l+2}\left(\frac{2(l-1)}{l(l-2)!}-\frac{2(l-2)}{(l-1)!}\right) \\
&=\frac{2(l+1)}{(l+2)l!} \\
& \\
b_{l+1} &=\frac{l+1}{l+2}\left(b_{l-1}-\frac{b_{l-2}}{l-2}\right) \\
&= \frac{l+1}{(l+2)}\left(\left(\frac{(l-1)^2}{l!}\right)\sum_{i=0}^{l-3}i!-\left(\frac{l(l-2)}{l!}\right)\sum_{i=0}^{l-4}i!\right) \\
&=\frac{l+1}{(l+2)l!}\left(l(l-2)\left(\sum_{i=0}^{l-3}i!-\sum_{i=0}^{l-4}i!\right) +\sum_{i=0}^{l-3}i! \right) \\
&=\frac{l+1}{(l+2)l!}\left(l(l-2)!+\sum_{i=0}^{l-3}i!\right) \\
&=\frac{l+1}{(l+2)l!}\sum_{i=0}^{l-1}i!
\end{align*}
We have completed the proof of \eqref{ex:3}. Since $x_n= x \cdot p_n$, we have
\begin{equation} \label{ex:4}
x_n=\frac{x_{n-1}}{n-1}.
\end{equation}
Recall the assumption $x_n=1$. Using \eqref{ex:3} with $k=n-1$ and solving for $x_1$ in \eqref{ex:4}, we obtain
\[
x_1=\frac{n(n-2)!}{2}+\frac{1}{2} \sum_{i=0}^{n-3} i!.
\]
We already  have an explicit expression for entries of $x$. We claim
\[
x_2> x_1 > x_3 > x_4 > \dots > x_{n-1} > x_n
\]
We can verify $x_2 > x_1 >x_3$ and $x_{n-1} > x_n$ directly. To prove the remaining inequalities, for each $3 \leq k \leq n-2$, \eqref{ex:3} yields
\begin{align}
x_k &= \frac{2k}{(k+1)(k-1)!}x_1-\frac{k}{(k+1)(k-1)!} \sum_{i=0}^{k-2} i! \label{ex:5} \\
x_{k+1} &=\frac{2(k+1)}{(k+2)k!}x_1-\frac{(k+1)}{(k+2)k!} \sum_{i=0}^{k-1} i! \label{ex:6}
\end{align}
We first multiply \eqref{ex:6} by a factor $-\tfrac{k^2(k+2)}{(k+1)^2}$ and add the resulted equation to \eqref{ex:5}. We get the following equation
\[
x_k-\frac{k^2(k+2)}{(k+1)^2}x_{k+1}=\frac{k}{k+1}.
\]
The equation above implies $x_k \geq x_{k+1}$ for each $3 \leq k \leq n-2$. We have finished the proof of the claim. As  the Perron vector $\phi$ is a positive multiplier of $x$, we have
\[
\min_{1 \leq i \leq n} \phi(v_i)= \phi({v_n}) \textrm{ and }  \max_{1 \leq i \leq n} \phi(v_i)=\phi({v_2}).
\]

Finally, we are able to compute
\begin{align*}
\frac{\phi({v_{2}})}{\phi({v_n})}&= \frac{x_{2}}{x_n}\\
     &=\frac{2n(n-2)!}{3}+\frac{2}{3} \sum_{i=0}^{n-3} i! -\frac{2}{3}\\
  &=\frac{2}{3}\left(\frac{n}{n-1}+\frac{1}{(n-1)!}\sum_{i=1}^{n-3}i!\right)(n-1)!.
 \end{align*}
 
 This completes the proof of Theorem \ref{thm1}.
\hfill $\square$

\section{A sufficient condition for a tightly bounded principal ratio} \label{s3}

So far, we have shown that the maximum of the principal ratio over all strongly connected $n$-vertex directed graphs is $(2/3 + o(1))(n-1)!$. On the other hand, the minimum of the principal ratio is 1 and is achieved by regular directed graphs. In this section, we examine conditions under which the principal ratio is ``close" to the minimum of 1.

An important tool in our analysis will be notion of circulation defined by Chung \cite{chung2}. In a directed graph $D$, consider a function $F : E(D) \to {\mathbb R}^+ \cup \{0 \}  $ that assigns
to each directed edge $(u, v)$ a nonnegative value $F(u, v)$. $F$ is said to be a
{\it circulation} if at each vertex $v$, we have
\[
\sum_{u: u\in N^-(v)} F(u,v)=\sum_{w: w \in N^+(v)} F(v,w).
\]
For a circulation $F$ and a directed edge $e=(u,v)$, we will write $F(e)$ for $F(u,v)$ in some occasions. If $\phi$ is the Perron vector of the transition probability matrix $P$, then   Claim 1 in \cite{chung2} tells us that we can associate a circulation $F_{\phi}$ to $\phi$, where
\[
F_{\phi}(v,w)=\frac{\phi(v)}{d^+(v)}.
\]
We note that the circulation $F_{\phi}$ has the following property.  At each vertex $v$, we have
\begin{equation} \label{g:eq1}
\sum_{u: u\in N^-(v)} F_{\phi}(u,v)=\phi(v)=\sum_{w: w \in N^+(v)} F_{\phi}(v,w).
\end{equation}
We will  repeatedly  use \eqref{g:eq1} in the proof of the following theorem.

\begin{theorem} \label{sufficientCond}
 Let $D=(V,E)$ be a strongly connected  directed graph and $\phi$ be the Perron vector of the transition probability matrix $P$.  If there are positive constants $a,b,c,d,\epsilon$ such that

\begin{description}
 \item[(i)] $(a-\epsilon)n \leq d^+(v), d^-(v) \leq (a+\epsilon) n $ for all $v \in V(D)$ and
\item[(ii)] $|E(S,T)| \geq b |S| |T|$ for all disjoint subsets $S$ and $T$ with $|S|\geq cn$ and $|T|\geq dn$,
\end{description}

\noindent then we have
 \[
\gamma(D) \leq  \frac{1}{C}   \textrm{ for  }
 C=\frac{b(a-5\epsilon)(a-\epsilon)}{4(a+\epsilon)^2}.
 \]
\end{theorem}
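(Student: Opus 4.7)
The plan is to exploit the circulation $F_\phi$ introduced just above the theorem statement. Let $M = \max_v \phi(v)$ and $m = \min_v \phi(v)$, attained at vertices $u^*$ and $w^*$ respectively, so $\gamma(D)=M/m$. If $M/m$ is already bounded by a small constant the claim is vacuous, so I may assume the ratio is large.

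First I would isolate two disjoint sets of vertices on which $\phi$ is respectively ``large'' and ``small'', taking
\[
S = \{v : \phi(v) \geq \alpha M\}, \qquad T = \{v : \phi(v) \leq \beta m\},
\]
for parameters $\alpha \in (0,1)$ and $\beta > 1$ to be chosen. To lower bound $|S|$, apply $\phi P = \phi$ at $u^*$:
\[
M = \sum_{v \in N^-(u^*)} \frac{\phi(v)}{d^+(v)} \leq \frac{1}{(a-\epsilon)n}\sum_{v \in N^-(u^*)} \phi(v).
\]
Splitting the sum according to whether $v \in S$ (where $\phi(v) \leq M$) or $v \notin S$ (where $\phi(v) < \alpha M$), and using $|N^-(u^*)| \leq (a+\epsilon)n$, a pigeonhole calculation yields
\[
|S| \geq |S \cap N^-(u^*)| \geq \frac{(a-\epsilon) - \alpha(a+\epsilon)}{1-\alpha}\,n.
\]
A symmetric argument at $w^*$, using the out-degree upper bound and $|N^-(w^*)| \geq (a-\epsilon)n$, gives an analogous lower bound on $|T|$. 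With $\alpha,\beta$ tuned so that both bounds are at least $cn$ and $dn$ respectively, hypothesis (ii) then gives $|E(S,T)| \geq b|S||T|$.

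Next I would estimate the total circulation $F_\phi(S,T) := \sum_{(u,v) \in E(S,T)} \phi(u)/d^+(u)$ in two ways. For a lower bound, each summand satisfies $\phi(u)/d^+(u) \geq \alpha M/((a+\epsilon)n)$, so
\[
F_\phi(S,T) \geq \frac{\alpha M b |S||T|}{(a+\epsilon)n}.
\]
For an upper bound, identity \eqref{g:eq1} says the total in-flow to $T$ equals $\phi(T) \leq \beta m|T|$; since the edges counted in $F_\phi(S,T)$ form a subset of all edges entering $T$, we obtain $F_\phi(S,T) \leq \beta m|T|$. Comparing the two estimates (the $|T|$ cancels) gives
\[
\frac{M}{m} \leq \frac{\beta(a+\epsilon)n}{\alpha b |S|}.
\]
Substituting the lower bound on $|S|$ into this inequality with carefully chosen thresholds (essentially $\alpha$ near $2/3$ to make $|S| \geq (a-5\epsilon)n$, together with a $\beta$ that injects the additional factor of $(a-\epsilon)/(a+\epsilon)$) produces the claimed bound $\gamma(D) \leq 1/C$.

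The main obstacle I expect is the final constant-matching: obtaining the exact factor $4(a+\epsilon)^2/\bigl(b(a-5\epsilon)(a-\epsilon)\bigr)$ rather than merely an expression of the same order requires a coordinated choice of both $\alpha$ and $\beta$, and possibly a slightly sharper packaging of the two-sided flow estimate. Two side issues also need handling: verifying $S \cap T = \varnothing$ (automatic once $M/m > \beta/\alpha$, below which the theorem is trivial), and confirming $|S|,|T| \geq cn, dn$ so the discrepancy hypothesis applies (automatic when $\epsilon$ is small relative to $a$ and to $c,d$).
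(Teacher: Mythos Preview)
Your approach is correct and uses the same engine as the paper (the circulation $F_\phi$ plus the discrepancy hypothesis), but the packaging differs. The paper does not form global level sets; instead it works with $U'=\{v\in N^-(u):\phi(v)>M/2\}$ and with $N^-(w)$ directly. It first shows $|U'|\ge (a-5\epsilon)n$ by the same pigeonhole you use for $|S|$, then splits into two cases: if $|N^-(w)\cap U'|\ge |U'|/2$ the bound is immediate, and otherwise it applies discrepancy to the pair $(U'',N^-(w))$ with $U''=U'\setminus N^-(w)$, finishing via a two-hop estimate $\phi(w)\ge \Phi_1/((a+\epsilon)n)$ where $\Phi_1=\sum_{v\in N^-(w)}\phi(v)$. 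Your symmetric level-set argument avoids the case split and is arguably cleaner. Regarding your ``main obstacle'': it dissolves with the concrete choice $\alpha=\tfrac12$, $\beta=2(a+\epsilon)/(a-\epsilon)$, which gives $|S|\ge (a-3\epsilon)n$, $|T|\ge (a-\epsilon)n/2$, and hence $\gamma(D)\le 4(a+\epsilon)^2/\bigl(b(a-3\epsilon)(a-\epsilon)\bigr)$, actually slightly sharper than the paper's stated constant (the paper's $(a-5\epsilon)$ arises from a looser bookkeeping step). The side issues you flag---disjointness of $S,T$ and the thresholds $|S|\ge cn$, $|T|\ge dn$---are handled in the paper exactly as you suggest, i.e., tacitly.
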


Before proceeding with the proof of Theorem $\ref{sufficientCond}$, we illustrate that neither the degree condition (i), nor the discrepancy condition (ii) alone guarantee a small principal ratio. We first give a construction which satisfies the degree requirement but fails the discrepancy condition and has principal ratio linear in $n$.

\begin{example} \label{DegreeEx}
Construct a directed graph $D$ on $2n+1$ vertices as follows: take two copies of $D_n$, the complete directed graph on $n$ vertices, as well as an isolated vertex $b$. Add an edge from each vertex in the first copy of $D_n$ to $b$ and an edge from $b$ to each vertex in the second copy of $D_n$. Finally, select a distinguished vertex from the first copy of $D_n$, which we denote $e$, and a distinguished vertex from the second copy of $D_n$, which we denote $d$, and add edge $(d,e)$. Let $A$ denote the induced subgraph of the first copy of $D_n$ obtained by deleting vertex $e$; similarly, $C$ is the induced subgraph obtained by deleting vertex $d$ from the second copy of $D_n$. See Figure $\ref{fig:DegreeEx}$ for an illustration.
\begin{figure}[htbp]
\centering
\includegraphics[scale=.15]{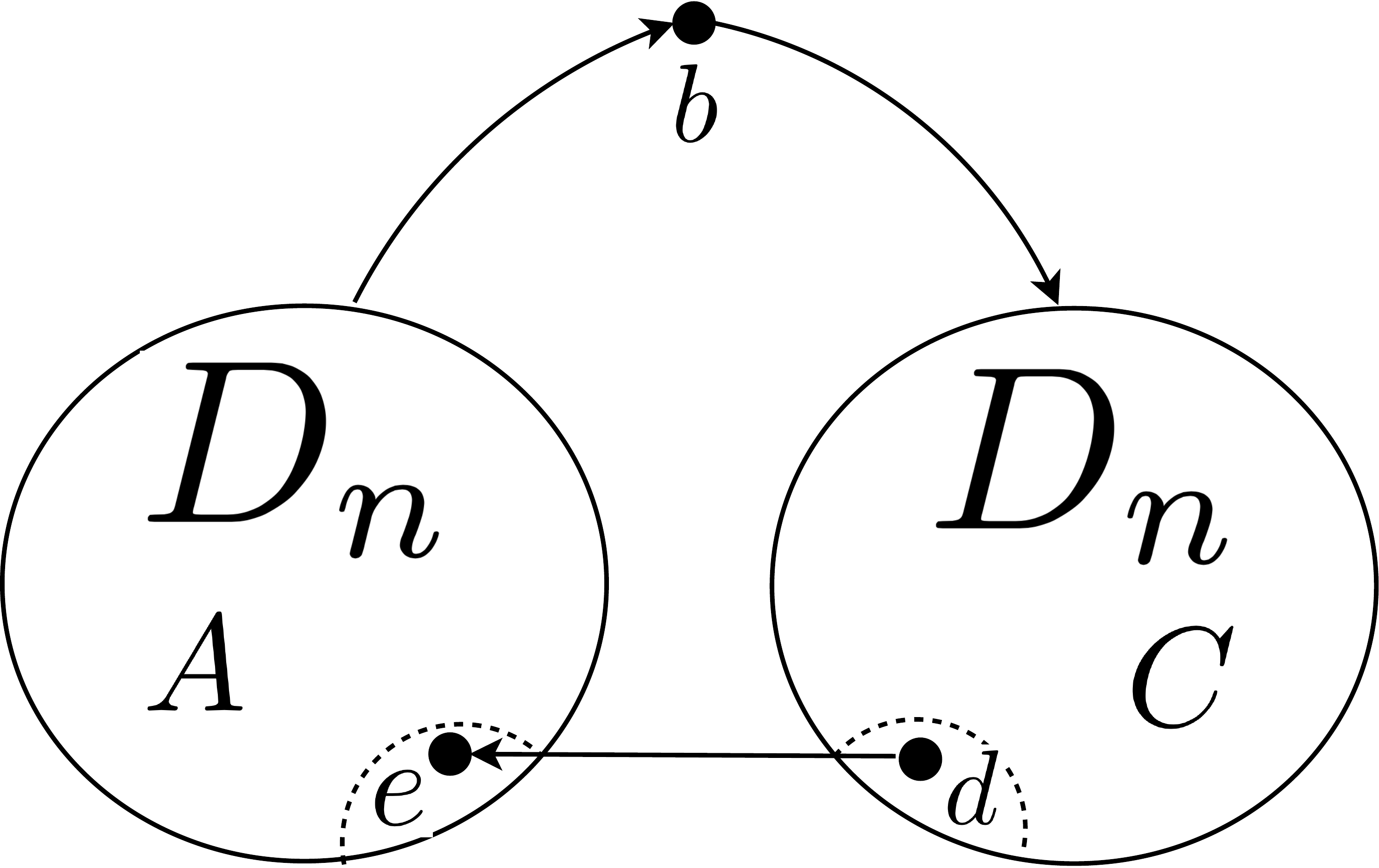}
\caption{The construction in Example $\ref{DegreeEx}$.}
\label{fig:DegreeEx}
\end{figure}
\end{example}

\begin{proposition} The construction $D$ in Example $\ref{DegreeEx}$ satisfies the degree condition of Theorem $\ref{sufficientCond}$ but not the discrepancy condition. The (unscaled) Perron vector of $D$ is given by

\[ \phi(u)=\begin{cases}
      1 & u \in V(A) \\
      \frac{n+1}{n} & u=b \\
      \frac{(n+1)^2(n-1)}{n^2} & u \in V(C) \\
      n+1 & u=d \\
      2 & u=e
   \end{cases}.
\]
Consequently, $\gamma(D)=\frac{\max_u \phi(u)}{\min_u \phi(u)}=n+1$. \end{proposition}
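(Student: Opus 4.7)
The plan is to verify in turn the three claims of the proposition: the degree condition (i) of Theorem \ref{sufficientCond} holds, the discrepancy condition (ii) fails, and the stated $\phi$ is the (unscaled) Perron vector; the value $\gamma(D) = n+1$ then follows by inspection.

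For (i), I would tabulate the in- and out-degrees by vertex class. A direct count shows every vertex has out-degree in $\{n-1, n\}$ and in-degree in $\{n-1, n\}$: the out-degree drops to $n-1$ only on $V(C)$ (whose vertices see only the rest of the second copy), and the in-degree drops to $n-1$ only on $V(A)$ (symmetrically, since its vertices receive edges only from the rest of the first copy). Since $|V(D)| = 2n+1$, taking $a = 1/2$ with any $\epsilon$ of order $1/n$ verifies (i) for $n$ large. For (ii), I would exhibit the explicit violating pair $S = V(A) \cup \{e\}$ and $T = V(C) \cup \{d\}$, the two copies of the complete digraph. Then $|S| = |T| = n$, yet $E(S,T) = \varnothing$: the only directed edges between the two copies are the single reverse edge $(d,e) \in E(T,S)$ and the two-hop paths through the vertex $b$, which lies in neither $S$ nor $T$. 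Hence (ii) fails for any fixed positive constants as $n$ grows.

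The substantive step is verifying $\phi P = \phi$, equivalently $\phi(v) = \sum_{u \in N^-(v)} \phi(u)/d^+(u)$, at each of the five vertex classes $V(A)$, $\{e\}$, $\{b\}$, $V(C)$, $\{d\}$. The first three are one-line substitutions since all the relevant out-degrees equal $n$: for $v \in V(A)$ the in-neighbors are the other $n-2$ vertices of $V(A)$ together with $e$, giving $(n-2)/n + 2/n = 1$; the cases $v = e$ and $v = b$ are analogous. The case $v \in V(C)$ is the only one with mixed out-degrees: the sum decomposes into contributions from the other $n-2$ vertices of $V(C)$ (each with out-degree $n-1$), from $d$ (out-degree $n$), and from $b$ (out-degree $n$); after factoring $(n+1)/n^2$ the expression collapses to the claimed $(n+1)^2(n-1)/n^2$. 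The case $v = d$ is analogous and yields $n+1$.

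Finally, from $(n+1)^2(n-1)/n^2 = (n+1)(n^2-1)/n^2 < n+1$ together with $\phi(b) = (n+1)/n > 1$ and $\phi(e) = 2 > 1$, the maximum of $\phi$ is $\phi(d) = n+1$ and the minimum is $1$, attained on $V(A)$; so $\gamma(D) = n+1$. I anticipate no genuine obstacle beyond the bookkeeping in the $V(C)$ verification, where one must correctly track which in-neighbors have out-degree $n-1$ versus $n$; everything else is direct substitution.
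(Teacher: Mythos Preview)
Your proposal is correct and follows essentially the same approach as the paper: both check the degree and discrepancy conditions directly and then handle the Perron vector via the stationary equations $\phi(v)=\sum_{u\in N^-(v)}\phi(u)/d^+(u)$, split by vertex class. The only cosmetic differences are that the paper derives the vector by setting $\phi(a)=1$ and solving, whereas you verify the stated values by substitution, and the paper exhibits the smaller pair $S=V(A)$, $T=V(C)$ rather than the full copies; neither affects the argument.
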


\noindent {\bf Proof}:
Observe that, for all $a \in V(A)$, $d_a^+=d_b^+=d_d^+=d_e^+=n$, and, for all $c \in V(C)$, $d_c^+=n-1$, thus $D$ satisfies the degree condition in Theorem $\ref{sufficientCond}$. However, $D$ fails the discrepancy condition since $E(V(A),V(C))=0$ where $|V(A)|=|V(C)|=n-1$. To compute the Perron vector of $D$, first observe that since $A$ and $C$ are vertex-transitive, $\phi(u)=\phi(a)$ for all $u,a \in V(A)$ and similarly $\phi(u)=\phi(c)$ for all $u,c \in V(C)$. Consider $a\in V(A)$. From $\phi=\phi P$, we obtain
\begin{align*}
\phi(a) &= \sum_{u \in N^-(a)} \phi(u) P(u,a) \\
&= \sum_{u \in N^-(a) \setminus V(A)} \phi(u)P(u,a)+ \sum_{u \in V(A)} \phi(u)P(u,a)\\
&= \frac{\phi(e)}{d_e^+}+\sum_{u \in V(A)} \frac{\phi(a)}{d_a^+}\\ &= \frac{\phi(e)}{n}+\frac{n-2}{n}\phi(a).
\end{align*}

In the same way as above, we also obtain equations for vertices $b,d,e$ and $c\in C$:
\begin{align*}
\phi(b) &= \frac{n-1}{n}\phi(a)+\frac{\phi(e)}{n} \\
\phi(c) &= \frac{\phi(b)}{n}+\frac{n-2}{n-1}\phi(c) + \frac{\phi(d)}{n} \\
\phi(d) &=\frac{\phi(b)}{n}+\phi(c) \\
\phi(e) &=\frac{n-1}{n}\phi(a)+\frac{\phi(d)}{n}
\end{align*}

We may set $\phi(a)=1$ and solve the above equations, yielding the result.
\hfill $\square$ \\

Next, we give a construction to illustrate the discrepancy condition alone is insufficient to guarantee a small principal ratio.

\begin{example} \label{DiscEx}
Construct a directed graph $D$ on $n+\sqrt{n}$ vertices as follows: First, construct the following graph from $\cite{chung2}$ on $\sqrt{n}$ vertices, which we denote $H_{\sqrt{n}}$. To construct $H_{\sqrt{n}}$, take the union of a directed cycle $C_{\sqrt{n}}$ consisting of edges $(v_j,v_{j+1})$ (where indices are taken modulo $\sqrt{n}$), and edges $(v_j,v_1)$ for $j=1,\dots,\sqrt{n}-1$.  Then, take a copy of $D_n$, the complete directed graph on $n$ vertices, and select from it a distinguished vertex $u$. Add edges $(v_1,u)$ and $(u,v_1)$. See Figure $\ref{fig:DiscEx}$ for an illustration.
\end{example}

\begin{figure}[htbp]
\centering
\includegraphics[scale=.4]{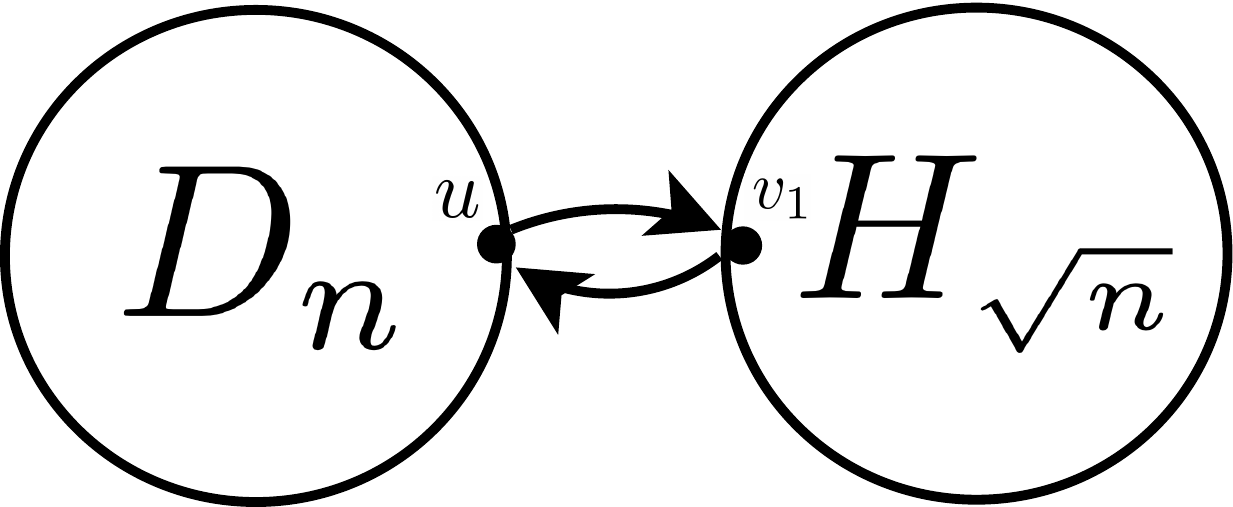}
\caption{The construction in Example $\ref{DiscEx}$.}
\label{fig:DiscEx}
\end{figure}
 It is easy to check $D$ as defined in Example \ref{DiscEx} satisfies the discrepancy condition in Theorem \ref{sufficientCond}, but not  the degree requirement (note $d^+_{v_{\sqrt{n}}}=1$ and $d^+_u=n$).  As noted in \cite{chung2}, the graph $H_{\sqrt{n}}$ has principal ratio $2^{\sqrt{n}-1}$.  Thus, $\gamma(D) \geq \gamma(H_{\sqrt{n}})=2^{\sqrt{n}-1}$.

Having shown that each condition in Theorem $\ref{sufficientCond}$ taken on its own is insufficient in ensuring a small principal ratio, we now prove that together they do provide a sufficient condition.\\

\noindent
{\bf Proof of Theorem $\ref{sufficientCond}$:}  We  assume
 \[
 \underset{v \in D(V)}{\max} \phi(v)=\phi(u) \textrm{ and  } \underset{v \in D(V)}{\min} \phi(v)=\phi(w).
 \]
We will show $\phi(w) \geq C \cdot \phi(u)$ instead, where $C$ is the constant in the statement of the theorem.
We  use  $U$  to denote the set $\{v \in N^-(u): \phi(v) \leq \tfrac{\phi(u)}{2}\}$. If $w \in N^-(u) \setminus U$, then we have nothing to show. Thus we assume $w \not \in N^-(u) \setminus U$. We consider the circulation $F_{\phi}$ associated with $\phi$ and recall \eqref{g:eq1}. By the definition of $U$, we have
\begin{align*}
\phi(u)=\sum_{v \in N^-(u)} F_{\phi}(v,u)&= \sum_{v \in U} F_{\phi}(v,u)+\sum_{v \in N^-(u) \setminus U} F(v,u)\\
                                  &  \leq  \sum_{v \in U} \frac{\phi(u)}{2(a-\epsilon)n}   +
                                      \sum_{v \in N^-(u) \setminus U} \frac{\phi(u)}{(a-\epsilon)n}\\
                                   & \leq  \frac{|U| \phi(u)}{2(a-\epsilon)n}+\frac{((a+\epsilon)n-|U| ) \phi(u)}{(a-\epsilon)n}
\end{align*}
Solving the inequality above, we have $|U| \leq 4\epsilon n$. Let $U'=N^-(u) \setminus U$. Then we have $|U'| \geq (a-5\epsilon) n$ as the assumption $|N^-(u)| \geq (a-\epsilon)n$.
If $|N^-(w) \cap U'| \geq \frac{|U'|}{2}$, then we have
\begin{align*}
\phi(w)&=\sum_{v \in N^-(w)} F_{\phi}(v,w) \\ &\geq \sum_{v \in N^-(w) \cap U'} F_{\phi}(v,w) \\
       & \geq \sum_{v \in N^-(w) \cap U' } \frac{\phi(u)}{2(a+\epsilon)n} \\
       &\geq \frac{(a-5\epsilon)\phi(u)}{4(a+\epsilon)}  \\
       & \geq C \cdot \phi(u)
\end{align*}
Therefore, we  assume $|N^-(w) \cap U'| <  \frac{|U'|}{2}$ in the remaining proof. We define $U''=U' \setminus N^-(w)$ and we have $|U''| \geq \tfrac{(a-5\epsilon)n}{2}$.
The assumption $|E(S,T)| \geq b |S||T|$ for any disjoint $S$ and $T$ implies
\begin{equation} \label{eq14}
|E(U'', N^-(w))| \geq b |U''| |N^-(w)| \geq \frac{b(a-5\epsilon)(a-\epsilon)n^2}{2}.
\end{equation}
Set  $\Phi_1=\sum_{v \in N^-(w)} \phi(v)$ and $E_1=E(U'',N^-(w))$. Using \eqref{g:eq1}, we will show the following inequality
\begin{equation} \label{eq15}
\Phi_1=\sum_{v \in N^-(w)} \sum_{z \in N^-(v)} F_{\phi}(z,v) \geq \sum_{e \in E_1} F_{\phi}(e) \geq \sum_{e \in E_1} \frac{\phi(u)}{2(a+\epsilon)n} \geq C(a+\epsilon)n \phi(u),
\end{equation}
\noindent where used inequality \eqref{eq14} in the last step.
By the definition of the circulation $F_{\phi}$, we have
\begin{equation} \label{eq16}
\phi(w)=\sum_{v \in N^-(w)} F_{\phi}(v,w) \geq \sum_{v \in N^-(w)} \frac{\phi(v)}{(a+\epsilon)n }=\frac{\Phi_1}{(a+\epsilon)n}.
\end{equation}
The combination of  inequalities \eqref{eq15} and \eqref{eq16} now completes the proof. 
\hfill $\square$


\begin{thebibliography}{9}
\bibitem{acl}R. Andersen, F. Chung and K. Lang, Local partitioning for directed graphs using PageRank,
{\it Internet Math.},  {\bf 5} (2008), 3--22.
\bibitem{chung1} F.~Chung, Spectral Graph Theory, {\it American Mathematical Society}, Providence, RI, 1997.
\bibitem{chung2} F.~Chung, Laplacians and the Cheeger inequality for directed graphs, {\it Ann.~Comb.,} {\bf 9} (2005),1--19.

\bibitem{cg} S.~M.~Cioab\u{a} and  D.~A.~Gregory,  Principal eigenvectors of irregular graphs, {\it  Electron.~J.~Linear Algebra}, {\bf 16} (2007), 366--379.

\bibitem{das}Kinkar~Ch.~Das,  A sharp upper bound on the maximal entry in the principal eigenvector of symmetric nonnegative matrix., {\it Linear Algebra Appl.,}  {\bf 431}(8)  (2009), 1340--1350.
 \bibitem{gr} C.~Godsil, G.~F.~Royle, {\it Algebraic Graph Theory}, Springer, New York, 2001.
\bibitem{la} G.~A.~Latham, A remark on Minc's maximal eigenvector bound for positive matrices, {\it SIAM J.~Matrix Anal.~Appl.,} {\bf 16} (1995), 307--311.

\bibitem{LT} M.~S.~Lynn and W.~P.~Timlake, Bounds for Perron eigenvectors and subdominant eigenvalues
of positive matrices, {\it Linear Algebra Appl.,} {\bf 2} (1969),  143--152.
\bibitem{minc}H.~Minc, On the maximal eigenvector of a positive matrix, {\it SIAM J.~Numer.~Anal.,}  {\bf 7} (1970), 424--427.
\bibitem{os1} A.~M.~Ostrowski, Bounds for the greatest latent root of a positive matrix,  {\it J.~London Math.~Soc.},  {\bf 27}  (1952),253--256.
\bibitem{os2} A.~M.~Ostrowski, On the eigenvector belonging to the maximal root of a non-negative matrix,
{\it Proc.~Edinb.~Math.~Soc.~(12)}, {\bf 12} (1960/1961), 107--112.
\bibitem{pr} B.~Papendieck and P.~Recht, On maximal entries in the principal eigenvector of graphs, {\it  Linear Algebra  Appl.}, {\bf 310} (2000), 129--138.
\bibitem{sch}H.~Schneider, Note on the fundamental theorem on irreducible non-negative matrices, {\it  Proc.~Edinb.~Math.~Soc.~(2)} , {\bf 11} (1958/1959), 127--130.
\bibitem{taitTobin} M.~Tait, J.~Tobin, Characterizing graphs of maximum principal ratio, arxiv:1511.06378 [math.CO] (2015).


\bibitem{zh} S.~Zhao and Y.~Hong, On the bounds of maximal entries in the principal eigenvector of symmetric
nonnegative matrix, {\it Linear Algebra  Appl.}, {\bf  340} (2002), 245--252.
\bibitem{zhang}  X-D.~Zhang, Eigenvectors and eigenvalues of non-regular graphs,  {\it Linear Algebra  Appl.},
{\bf 49} (2005), 79--86.
\end{thebibliography}
\end{document}